\newtheorem{thm}{Theorem}[section]
\newtheorem{cor}[thm]{Corollary}
\newtheorem{lem}[thm]{Lemma}
\newtheorem{prop}[thm]{Proposition}
\theoremstyle{definition}
\newtheorem{defn}[thm]{Definition}
\newtheorem{prob}[thm]{Problem}
\newtheorem{example}[thm]{Example}
\numberwithin{equation}{section}
\def\({\left(}
\def\){\right)}
\def\N{\mathbb{ N}}
\def\R{\mathbb{ R}}
\newcommand{\cJ}{{\mathcal J}}
\newcommand{\cI}{\mathcal I}
\def\1{\textbf{1}}
\newcommand{\To}{\longrightarrow}
\def\betaN{\beta \mathbb{N}}
\title[]{Compact spaces associated to separable Banach lattices}
\author[Avil\'es]{Antonio Avil\'es}
\address[Avil\'es]{Universidad de Murcia, Departamento de Matem\'{a}ticas, Campus de Espinardo 30100 Murcia, Spain
	\newline
	\href{https://orcid.org/0000-0003-0291-3113}{ORCID: \texttt{0000-0003-0291-3113} } }
\email{\texttt{avileslo@um.es}}
\author[Mart\'inez-Cervantes]{Gonzalo Mart\'inez-Cervantes}
\address[Mart\'inez-Cervantes]{Universidad de Alicante, Departamento de Matem\'{a}ticas, Facultad de Ciencias, 03080 Alicante, Spain
	\newline
	\href{http://orcid.org/0000-0002-5927-5215}{ORCID: \texttt{0000-0002-5927-5215} } }	
\email{gonzalo.martinez@ua.es}
\author[Rueda Zoca]{Abraham Rueda Zoca}
\address[Rueda Zoca]{Universidad de Murcia, Departamento de Matem\'{a}ticas, Campus de Espinardo 30100 Murcia, Spain
	\newline
	\href{https://orcid.org/0000-0003-0718-1353}{ORCID: \texttt{0000-0003-0718-1353} }}
\email{\texttt{abraham.rueda@um.es}}
\urladdr{\url{https://arzenglish.wordpress.com}}
\author[P. Tradacete]{Pedro Tradacete}
\address[Tradacete]{Instituto de Ciencias Matem\'aticas (CSIC-UAM-UC3M-UCM)\\
	Consejo Superior de Investigaciones Cient\'ificas\\
	C/ Nicol\'as Cabrera, 13--15, Campus de Cantoblanco UAM\\
	28049 Madrid, Spain.}
\email{pedro.tradacete@icmat.es}
\subjclass[2010]{Primary 46B42; Secondary 46B40, 54G99.}
\keywords{Banach lattice, structure spaces}
\thanks{}
\begin{document}

\begin{abstract}
	We study the class of compact spaces that appear as structure spaces of separable Banach lattices. In other words, we analyze what $C(K)$ spaces appear as principal ideals of separable Banach lattices. Among other things, it is shown that every such compactum $K$ admits a strictly positive regular Borel measure of countable type that is analytic, and in the nonmetrizable case these compacta are saturated with copies of $\beta\mathbb N$. Some natural questions about this class are left open.
\end{abstract}

\maketitle

\section{Introduction}

Given a vector lattice $E$ and a nonzero positive vector $u\in E_+$, there is a natural way to associate a compact space $K_u(E)$: Following \cite{Schaefer72}, $K_u(E)$ can be described as the set of all functions $\varphi:E\To [0,+\infty]$ such that  $\varphi(u) = 1$, $\varphi(x+y)=\varphi(x)+\varphi(y)$, $\varphi(x\wedge y) = \varphi(x)\wedge \varphi(y)$ and $\varphi(rz)  = |r|\varphi(|z|)$ for all $x,y\in E_+$, $z\in E$ and $r\in \R$. Here, $[0,+\infty]$ are the nonnegative reals extended by a point $+\infty$ (satisfying usual conventions plus $+\infty\cdot 0 = 0$), and the topology is the pointwise topology. When $E$ is a Banach lattice, then the principal ideal
$$ E_u= \{y\in E : \exists r>0 : |y|\leq ru\}$$ 
becomes, by the Kakutani representation theorem, canonically isomorphic, as a vector lattice, to the space of continuous functions $C(K_u(E))$. We have a one-to-one Banach lattice homomorphism $$T:(C(K_u(E)),\|\cdot\|_\infty)\To E$$ of norm 1, whose range is $E_u$, and that sends the constant one function to $u$. A vector lattice isomorphism between $C(K)$ and $C(L)$ that preserves the constant functions induces a homeomorphism between $K$ and $L$, so $K_u(E)$ is uniquely determined by the existence of such $T$.  For details, consult \cite{Schaefer72} or \cite[Theorem III.4.5]{Schaeferbook}. The purpose of this paper is to investigate the class of compact spaces that arise in this way from \emph{separable} Banach lattices.

\begin{defn}
	We say that a compact space $K$ is \emph{sick}\footnote{for \emph{separable ideal $C(K)$}} if it is homeomorphic to $K_u(E)$ for some separable Banach lattice and some $u\in E_+$.
\end{defn}

In the sequel, $E$ will always be a Banach lattice. It is shown in \cite[Proposition 2]{Schaefer72} that $K_u(E)$ is homeomorphic to $K_v(E)$ whenever $\overline{E_u} = \overline{E_v}$. The vector $u$ is called a quasi-interior point of $E$ if $E=\overline{E_u}$. Whenever such points exist (and this is the case for all separable Banach lattices), we can talk about $K(E)$, the structure space of $E$, as the compact space homeomorphic to $K_u(E)$ for any quasi-interior point $u\in E$. An important feature is that we can represent $E$ as an ideal of the vector lattice of continuous functions $f:K(E)\To\mathbb{R}\cup\{-\infty,+\infty\}$ with $f^{-1}(\mathbb{R})$ dense, (cf. \cite[Theorem 3.5]{Wickstead92}). The class of sick compacta coincides with the class of structure spaces of separable Banach lattices.\\

Let us now discuss what we have found out about the class of sick compacta. In Section \ref{SectionFirstExamples}, we present the first obvious examples. The structure space of $C(K)$ is the compact space $K$, and therefore all metrizable compact spaces are sick. More generally, for $f\in C(K)_+$, $K_f(C(K))$ is the \v{C}ech-Stone compactification of $\{x : f(x)\neq 0\}$, and therefore $\beta W$ is sick for any open subset $W$ of a compact metric space. A particular case of this is $\beta\mathbb{N}$, which also appears as the structure space of any Banach lattice with the lattice structure given by an unconditional basis, like $c_0$ or $\ell_p$. Finally the structure space of $L_1[0,1]$ is the Stone space $\mathfrak{M}$ of the measure algebra of the Lebesgue measure. Indeed the principal ideal generated by the constant one function is, as a vector lattice, identified with $L_\infty[0,1] = C(\mathfrak{M})$. Thus, $\mathfrak{M}$ gives a nonseparable sick compactum. We also show that the class of sick compacta is not stable under subspaces or quotients, but it contains subspace-universal and quotient-universal spaces.\\

By the way that sick compacta are linked to separable Banach lattices, that are in particular Polish spaces, they should have low complexity in the sense of descriptive set theory. This idea is exploited in Section \ref{SectionAnalytic}. We prove that every sick compactum $K$ admits a strictly positive regular Borel measure $\mu$ of countable type that is moreover \emph{analytic}. This means that, through the natural embedding, $C(K)$ is an analytic subset of $L_1(\mu)$. We also show that for any countable set $D$ of $G_\delta$ points of $K$,  the set of restrictions $\{f|_D : f\in C(K)\}$ form an analytic subset of $\mathbb{R}^D$, and some more general results. These facts are reminiscent of Godefroy's results on Rosenthal compact spaces \cite{Godefroy}. In a little disgression from the main topic, in Section~\ref{SectionRosenthal} we observe that all measures on Rosenthal compacta are analytic. However, the two classes are very different. Both Rosenthal and sick compacta are \emph{of analytic complexity}, but they are orthogonal from the Efimov's problem perspective: while Rosenthal compacta are full of convergent sequences, nonmetrizable sick compacta are full of copies of $\beta\mathbb{N}$. \\

This last statement is made precise in the main result of Section~\ref{SectionCopiesbetaN}: Every sick compactum contains a (possibly trivial) sequence of metrizable closed subsets $K_n$, such that the closure of every discrete sequence $x_n\not\in K_n$ is homeomorphic to $\beta\mathbb{N}$. In particular, nonmetrizable sick compacta always contain copies of $\beta\mathbb{N}$.\\

In Section \ref{SectionSuprema}, we connect sick compacta to a different problem. Given, a sequence $\{e_n\}$ of positive pairwise disjoint vectors in a Banach lattice, we can consider the family $\mathcal{S}_E(\{e_n\})$ of all subsets $A\subseteq\mathbb{N}$ for which the supremum $\sup_{n\in A}e_n$ exists in $E$. What are the families $\mathcal{A}$ of subsets of $\mathbb{N}$ that can be represented as $\mathcal{S}_E(\{e_n\})$ with $E$ separable? The main result of this section is that, if $\mathbb{N}\in \mathcal{A}$, this happens if and only if $\mathcal{A}$ is an algebra of sets containing the finite sets that is isomorphic, as a Boolean algebra, to the clopen subsets of a sick compactum.\\

Borodulin-Nadzieja, Farkas and Plebanek \cite{BFP} have characterized the families that can be represented as $\mathcal{S}_E(\{e_n\})$ with $E$ a Banach lattice obtained from an unconditional basis. These are exactly the so called non-pathological analytic $P$-ideals. This does not give direct information on sick compacta because these families do not contain $\mathbb{N}$ except in one trivial case, and the structure space of an unconditional Banach lattice is always $\beta\mathbb{N}$. However, a variation of their construction obtained by adding an upper bound to the basis (similarly to how $c$ is obtained from $c_0$) provides a new family of examples, described in Section \ref{SectionPideals}. If $\mathcal{I}$ is a non-pathological analytic $P$-ideal, then the Stone space of the algebra generated by $\mathcal{I}$ is a sick compactum.\\

In spite of the number of properties and examples of sick compacta that we have exposed, a satisfactory understanding of this class ---which means understanding the vector lattice structure of separable Banach lattices--- is far from being reached. The paper is scattered with open questions that arise naturally. Perhaps the most fundamental is to have an intrinsic handable topological characterization. We collect some of these questions at the end of the paper.


\section{First examples}
\label{SectionFirstExamples}

\begin{prop}
	Let $W$ be an open subset of a compact metric space $K$, then $\beta W$ is sick.
\end{prop}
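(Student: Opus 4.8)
The plan is to realize $\beta W$ directly as a structure space of the separable Banach lattice $C(K)$, so that essentially all the work reduces to a single elementary construction. Recall two facts already recorded in the introduction: first, that $C(K)$ is a separable Banach lattice whenever $K$ is a compact metric space; and second, that for every $f\in C(K)_+$ the compactum $K_f(C(K))$ is (homeomorphic to) the \v{C}ech--Stone compactification of the cozero set $\{x\in K:f(x)\neq 0\}$. Granting these, it suffices to produce a single $f\in C(K)_+$ whose cozero set equals $W$; then $\beta W\cong K_f(C(K))$ is sick by definition.

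The construction is where the hypotheses (metrizability of $K$, openness of $W$) are used, and this is the only step with any content. First I would fix a compatible metric $d$ on $K$. If $W=K$, I take $f\equiv 1$, so that $\{f\neq 0\}=K=W$ (and $\beta W=K$). Otherwise $K\sm W$ is a nonempty closed subset of $K$, and I set $f(x)=d(x,K\sm W)$; this is continuous and nonnegative, and $f(x)=0$ exactly when $x\in\overline{K\sm W}=K\sm W$, so $\{x:f(x)\neq 0\}=W$ as required. In short, the distance function witnesses the standard fact that an open subset of a metric space is a cozero set, and that is the crux of the matter.

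I expect no real obstacle beyond invoking the description of $K_f(C(K))$ for $C(K)$. If one prefers not to cite that fact, the alternative is to check by hand that the norm-closure in $C(K)$ of the principal ideal generated by the above $f$ is lattice-isometric to $C_0(W)$, whose structure space is $\beta W$ by the Kakutani representation theorem; but this is merely a direct verification of the same statement. Either way, combining the separability of $C(K)$ with the cozero-set construction yields that $\beta W$ is sick.
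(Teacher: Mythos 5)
Your construction of the witnessing function is exactly the paper's (a multiple of $x\mapsto d(x,K\sm W)$ has cozero set $W$), but that step is the trivial part of the proposition, and the step that carries all the content is missing. The ``fact recorded in the introduction'' that $K_f(C(K))$ is the \v{C}ech--Stone compactification of $\{x: f(x)\neq 0\}$ is not an independently available ingredient: the introduction is merely announcing the results of Section 2, and this proposition \emph{is} that statement. Invoking it makes the argument circular. What actually has to be proved is that the principal ideal $C(K)_f=\{g\in C(K): \exists r>0,\ |g|\le rf\}$ is vector-lattice isomorphic to $C_b(W)=C(\beta W)$ by an isomorphism sending $f$ to the constant one function; only then does the uniqueness of the representation give $K_f(C(K))\cong \beta W$. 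The paper does this by exhibiting $T:C_b(W)\to C(K)$, $T\phi=\phi\cdot f$ on $W$ and $0$ off $W$, and checking that $T$ is a lattice isomorphism onto $C(K)_f$ (the nontrivial direction being that if $|g|\le rf$ then $g/f$ restricted to $W$ is bounded and continuous and $T(g/f)=g$). None of this is long, but it is the proof, and your proposal defers it.

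Your fallback route does not escape the issue either. Passing to the norm closure of the ideal, which is $\{g\in C(K): g|_{K\sm W}=0\}\cong C_0(W)$, is legitimate (by \cite[Proposition 2]{Schaefer72} the compactum $K_f(C(K))$ depends only on $\overline{C(K)_f}$), but the assertion that the structure space of $C_0(W)$ is $\beta W$ ``by the Kakutani representation theorem'' is again the same unproved identification in disguise: Kakutani tells you the principal ideal generated by a quasi-interior point of $C_0(W)$ is some $C(L)$, and recognizing $L$ as $\beta W$ requires showing that this ideal is lattice isomorphic to $C_b(W)$ with the generator going to $\mathbf{1}$ --- i.e.\ the same division-by-$f$ computation as above. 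So to complete your argument, write out the isomorphism $\phi\mapsto \phi\cdot f$ and verify that its range is exactly the ideal generated by $f$; with that added, your proof coincides with the paper's.
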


\begin{proof}
	Find a continuous function $f:K\To [0,1]$ such that $W=f^{-1}(0,1]$ (for instance a suitable multiple of $f(x):=d(x,K\setminus W)$).  Let $C_b(W)$ be the space of all bounded and continuous functions $\phi:W\To\mathbb{R}$ and let $T:C_b(W)\To C(K)$ be given by 
	$$T\phi(x) = \begin{cases} \phi(x)\cdot f(x) &\text{ if } x\in W, \\ 0 &\text{ if } x\not\in W.\end{cases}$$
	This function $T$ is a vector lattice isomorphism onto its range. We claim that the range of $T$ is exactly the ideal $C(K)_f$ generated by $f$ in $C(K)$. Once we have the claim proved, this ideal would be vector lattice isomorphic to $C_b(W) = C(\beta W)$, which proves that $K_f(C(K))$ is homeomorphic to $\beta W$ and the proof would be finished. 
	
	So let us prove the claim. To do so, if $|\phi|$ is bounded by $\lambda\in (0,+\infty)$, then $|T\phi| \leq \lambda\cdot f$, and this proves that the range of $T$ is contained in $C(K)_f$. Conversely, if $g\in C(K)_f$, then the restriction of $g/f$ to $W$ is a bounded continuous function $\phi\in C_b(W)$ which satisfies that $g=T\phi$.
\end{proof}

In particular, $\betaN$ is a sick compactum. In general, all the examples of the form $\beta W$ are separable. A nonseparable example comes when we look at $L_1[0,1]$. For the constant 1 function $x\in L_1[0,1]$, the ideal $L_1[0,1]_x$ is clearly $L_\infty[0,1]$, which is identified with the space $C(\mathfrak{M})$ where $\mathfrak{M}$ is the Stone space of the measure algebra of the Lebesgue measure on $[0,1]$. Thus $\mathfrak{M}$ is a sick compactum.\\

An elementary remark is that every separable Banach lattice is $ccc$, i.e.~every pairwise disjoint family of positive elements is countable. This property is inherited by ideals, so $C(K)$ is $ccc$ for a sick compactum $K$, and so $K$ itself is $ccc$ (via Urysohn Lemma), in the sense that every pairwise disjoint family of nonempty open sets is countable. We shall see later that something stronger holds: every sick compactum supports a strictly positive measure.
Anyway, we can already notice that the class of sick compacta is not stable under taking subspaces, since $\beta\mathbb{N}$ is sick but $\betaN \setminus \N$ is not because it fails $ccc$ \cite[Theorem 3.22]{Walker}. The class of sick spaces is not stable under continuous images either. We shall see concrete examples later, but we can use a counting argument. There are only $\mathfrak{c}$ many different separable Banach lattices, but there are $2^\mathfrak{c}$ non-homeomorphic separable compact spaces (cf. for instance \cite{Freniche}), all of them continuous images of $\betaN$.\\

Despite this unstability, we do have universal objects. The assignment $E\mapsto K(E)$ takes quotients to inclusions, cf. \cite[Propositions 3]{Schaefer72}. 
Every separable Banach lattice is a quotient of the free Banach lattice $FBL(\mathbb{N})$, which is separable \cite{dPW15}. Thus, the structure space $K(FBL(\mathbb{N}))$ is a sick compactum that contains any other sick compactum as a subspace. The topological nature of this universal sick compactum looks mysterious to us. On the other hand, by Theorem~\ref{analyticmeasure}, every sick compactum will be a continuous image of $\mathfrak{M}$, cf. \cite[415Q-415R]{Fremlinbook}.

\section{Analytic complexity}
\label{SectionAnalytic}

A principal ideal of a Banach lattice $E$ is an $F_\sigma$ subset of $E$. So when $K$ is a sick compactum, $C(K)$ appears bijected with a nice subset of a Polish space $E$. This situation suggests that $C(K)$ should be viewed as an object of low descriptive complexity, but it is not clear at first glance in what sense. In this section, we will present several results that make precise the idea that $C(K)$ is of {analytic} complexity when $K$ is sick. The first such result also encircles our class within the realm of compact spaces that admit strictly positive measures.  By a measure on a compact space $K$ we mean a regular Borel measure on $K$. Such a measure is of countable type if the corresponding space $L_1(\mu)$ is separable, and it is strictly positive if all nonempty open sets have nonzero measure.

\begin{defn}
	A measure $\mu$ on a compact space $K$ will be called \emph{analytic} if it is of countable type and the range of the formal identity $i:C(K)\To L_1(\mu)$ is an analytic set.
\end{defn}

\begin{thm}\label{analyticmeasure}
	If $K$ is sick, then $K$ admits a strictly positive analytic measure.
\end{thm}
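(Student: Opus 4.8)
The plan is to manufacture $\mu$ directly from a strictly positive functional on $E$. Fix a separable Banach lattice $E$ and $u\in E_+$ with $K=K_u(E)$, and let $T:C(K)\To E$ be the canonical norm-one injective lattice homomorphism with $T\mathbf 1=u$ and range the principal ideal $E_u$. Since $E$ is separable, I would first produce a strictly positive functional $\phi\in E^*_+$ with $\|\phi\|\le 1$: pick a dense sequence $(x_n)$ in the positive part of the unit sphere, functionals $\phi_n\in E^*_+$ with $\|\phi_n\|=\phi_n(x_n)=1$, and set $\phi=\sum_n 2^{-n}\phi_n$; if $0<x$ then $\phi_n(x)>0$ for some $n$ (choose $x_n$ close to $x/\|x\|$), whence $\phi(x)>0$. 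Then $f\mapsto \phi(Tf)$ is a positive linear functional on $C(K)$, and I let $\mu$ be the regular Borel measure it induces via the Riesz representation theorem, so $\int_K f\,d\mu=\phi(Tf)$ for all $f\in C(K)$. Strict positivity is immediate: if $U\subseteq K$ is nonempty open, Urysohn's lemma gives $f\in C(K)_+\setminus\{0\}$ supported in $U$, and then $Tf>0$ by injectivity of $T$, so $\mu(U)\ge\int f\,d\mu=\phi(Tf)>0$.

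Next I would check that $\mu$ is of countable type. For $f\in C(K)$ one has $\|f\|_{L_1(\mu)}=\int|f|\,d\mu=\phi(|Tf|)$, so the formal identity $i:C(K)\To L_1(\mu)$ factors as the canonical contractive linear map $q:E\To L_1(\phi)$ precomposed with $T$, where $L_1(\phi)$ is the abstract $L_1$-space of $E$ with respect to $\phi$, i.e.\ the completion of $E/\{x:\phi(|x|)=0\}$ under $x\mapsto\phi(|x|)$. Since $E$ is separable, so is $L_1(\phi)$, hence so is its closed subspace $L_1(\mu)=\overline{i(C(K))}$. (Alternatively, one can simply transport a norm-dense sequence of $E_u$ through $T^{-1}$.)

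It would then remain to see that $i(C(K))$ is an analytic subset of $L_1(\mu)$. Under the above identification, $i(C(K))=q(E_u)$. Now the principal ideal decomposes as $E_u=\bigcup_{n\in\N}B_n$ with $B_n=\{y\in E:|y|\le nu\}$, and each $B_n$ is a closed (the lattice operations are norm-continuous and the positive cone is closed), bounded subset of the Polish space $E$, hence itself Polish. Since $q$ is continuous, each $q(B_n)$ is an analytic subset of the Polish space $L_1(\phi)$, being a continuous image of a Polish space; therefore $i(C(K))=\bigcup_n q(B_n)$ is analytic in $L_1(\phi)$. As $L_1(\mu)$ is a closed, in particular Borel, subspace of $L_1(\phi)$ containing $i(C(K))$, the set $i(C(K))$ is analytic as a subset of $L_1(\mu)$ as well, and $\mu$ is the desired strictly positive analytic measure.

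I do not expect a single genuine obstacle here: the two points needing a little care are the verification that $\mu$ is of countable type---cleanest via embedding $L_1(\mu)$ into the separable $L_1(\phi)$---and the descriptive-set-theoretic bookkeeping that lets analyticity pass from $L_1(\phi)$ down to the Borel subspace $L_1(\mu)$. The real content is the elementary observation, already noted in the text, that a principal ideal $E_u$ is an $F_\sigma$ in $E$, so its image under the continuous map $q$ is a countable union of continuous images of Polish spaces, hence analytic.
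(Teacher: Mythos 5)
Your proposal is correct and follows essentially the same route as the paper: compose a strictly positive functional on the separable lattice $E$ with the embedding $T$ to get the measure via Riesz representation, note the formal identity into $L_1(\mu)$ is continuous on $T(C(K))$, and use that the principal ideal is an $F_\sigma$ in the Polish space $E$ so its continuous image is analytic. The only differences are that you spell out details the paper leaves implicit (the construction of the strictly positive functional and the verification of countable type via the AL-space $L_1(\phi)$), which do not change the argument.
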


\begin{proof}
	Let $T:C(K)\To E$ be the vector lattice isomorphism from $C(K)$ onto an ideal $E_x$ of a separable Banach lattice $E$. Since $E$ is  separable, we have a positive norm-one functional $x^*:E\To \mathbb{R}$ such that $x^*(y)>0$ whenever $y>0$. The composition $x^\ast\circ T$ gives a strictly positive functional on $C(K)$, that is represented by a strictly positive measure $\nu$ on $K$. Let $i:C(K)\To L_1(\nu)$ be the formal inclusion. Note that the natural identity $T(C(K))\To i(C(K))$ is a continuous operator between normed spaces, because 
	$$\|if\| = x^\ast(T|f|) \leq  \|Tf\|$$ for every $f\in C(K)$. Since $$T(C(K))=E_x=\bigcup_{n \in \N}\{y\in E: |y|\leq nx\}$$ is a countable union of closed subsets of a separable Banach lattice $E$, we conclude that $i(C(K))$ is analytic.
\end{proof}

%
%
%

\begin{cor}\label{coranalytic1}
	If $K$ is sick and $\{x_n : n\in\mathbb{N}\}$ is a set of isolated points of $K$, then $\{(f(x_n))_{n\in\mathbb{N}}: f\in C(K)\}$ is an analytic subset of $\mathbb{R}^\mathbb{N}$.
\end{cor}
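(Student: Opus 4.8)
The plan is to deduce this directly from Theorem~\ref{analyticmeasure}. First I would apply that theorem to fix a strictly positive analytic measure $\mu$ on $K$, so that the range $i(C(K))$ of the formal inclusion $i:C(K)\To L_1(\mu)$ is an analytic subset of the Polish space $L_1(\mu)$ (recall $L_1(\mu)$ is separable, as $\mu$ is of countable type). Since each $x_n$ is an isolated point, the singleton $\{x_n\}$ is open and nonempty, hence $a_n:=\mu(\{x_n\})>0$ by strict positivity; in particular $\{x_n\}$ is an atom of $\mu$, so the evaluation $g\mapsto g(x_n)$ is a well-defined linear functional on $L_1(\mu)$ (it does not depend on the chosen representative), and it is bounded because $|g(x_n)|\,a_n\leq \|g\|_{L_1(\mu)}$.

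Next I would consider the map $\Phi:L_1(\mu)\To\mathbb{R}^\mathbb{N}$ defined by $\Phi(g)=(g(x_n))_{n\in\mathbb{N}}$. Each coordinate functional is continuous by the previous paragraph, and $\mathbb{R}^\mathbb{N}$ carries the product topology, so $\Phi$ is continuous. Moreover, for $f\in C(K)$ the element $i(f)\in L_1(\mu)$ admits the continuous function $f$ as a representative, so $\Phi(i(f))=(f(x_n))_{n\in\mathbb{N}}$. Therefore
$$\{(f(x_n))_{n\in\mathbb{N}}:f\in C(K)\}=\Phi(i(C(K)))$$
is the image of an analytic set under a continuous map between Polish spaces, hence analytic in $\mathbb{R}^\mathbb{N}$.

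I do not expect any real obstacle here beyond Theorem~\ref{analyticmeasure} itself; the only point that requires a moment's care is the observation that strict positivity forces the isolated points to have positive measure, which is exactly what makes the coordinate evaluations descend continuously to $L_1(\mu)$. If one preferred to avoid measures altogether, an alternative would be to work with the operator $T:C(K)\To E$ directly and compose it with positive functionals on $E$ detecting the atoms corresponding to the $x_n$; but routing through the analytic measure keeps the whole argument inside a single Polish space and seems the cleanest option.
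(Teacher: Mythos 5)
Your proof is correct and follows essentially the same route as the paper: both invoke Theorem~\ref{analyticmeasure} and push the analytic set $i(C(K))\subset L_1(\mu)$ forward via the evaluations at the isolated points, using that strict positivity gives $\mu\{x_n\}>0$. The only cosmetic difference is that the paper factors this through $\ell_1$ (a bounded operator $f\mapsto(\mu\{x_n\}f(x_n))_n$ followed by a Borel coordinatewise division), whereas you observe directly that the evaluations are well defined and continuous on $L_1(\mu)$, which is a perfectly valid and slightly cleaner packaging of the same idea.
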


\begin{proof}
	Consider the formal identity $i:C(K)\To L_1(\mu)$, the function $j:L_1(\mu)\To \ell_1$ given by $j(f) = (\mu\{x_n\}\cdot f(x_n))_{n}$, and the function $\phi:\ell_1\To \mathbb{R}^\mathbb{N}$ given by 
	$$\phi((t_n)_{n}) =  \Big(\frac{t_n}{ \mu\{x_n\}}\Big)_{n}.$$ Note that $j$ is a bounded operator and $\phi$ is a Borel function, so the composition is Borel. Since $i(C(K))$ is analytic, so is $(\phi\circ j)(i(C(K)))$, which is the set in the statement.
\end{proof}

\begin{cor}\label{coranalytic2}
	Let $\mathcal{B}$ be a subalgebra of $\mathcal{P}(\mathbb{N})$ that contains all finite sets. If the Stone space of $\mathcal{B}$ is sick, then $\mathcal{B}$ must be an analytic subalgebra of $\mathcal{P}(\mathbb{N})$.
\end{cor}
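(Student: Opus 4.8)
The plan is to identify $\mathcal{B}$, viewed as a subset of $\mathcal{P}(\mathbb{N})\cong 2^{\mathbb{N}}$, with the preimage of the analytic set furnished by Corollary~\ref{coranalytic1} under the canonical embedding $A\mapsto\chi_A$ of $\mathcal{P}(\mathbb{N})$ into $\mathbb{R}^{\mathbb{N}}$. Write $K$ for the Stone space of $\mathcal{B}$. Since $\mathcal{B}$ contains every finite set, each singleton $\{n\}$ is an atom of $\mathcal{B}$, so the corresponding principal ultrafilter $x_n\in K$ is an isolated point; and the set $\{x_n:n\in\mathbb{N}\}$ is dense in $K$, because every nonempty basic clopen set $\widehat{A}$ (with $\emptyset\neq A\in\mathcal{B}$) contains $x_n$ for any $n\in A$. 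From now on identify $\mathbb{N}$ with $\{x_n:n\in\mathbb{N}\}\subseteq K$, so that $\widehat{A}\cap\mathbb{N}=A$ for each $A\in\mathcal{B}$.

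The core of the argument is the equality
$$\mathcal{B}=\{A\subseteq\mathbb{N}:\chi_A\in R\},$$
where $R:=\{f|_{\mathbb{N}}:f\in C(K)\}\subseteq\mathbb{R}^{\mathbb{N}}$. For the inclusion ``$\subseteq$'', if $A\in\mathcal{B}$ then $\chi_{\widehat{A}}\in C(K)$ and $\chi_{\widehat{A}}|_{\mathbb{N}}=\chi_A$. For ``$\supseteq$'', suppose $\chi_A=f|_{\mathbb{N}}$ for some $f\in C(K)$; since $\mathbb{N}$ is dense and $f$ is continuous, $f(K)\subseteq\overline{f(\mathbb{N})}\subseteq\{0,1\}$, so $f=\chi_U$ for a clopen $U\subseteq K$. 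Writing $U=\widehat{A'}$ with $A'\in\mathcal{B}$ gives $A=U\cap\mathbb{N}=A'\in\mathcal{B}$.

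Finally one assembles the pieces. By Corollary~\ref{coranalytic1}, applied to the isolated points $\{x_n:n\in\mathbb{N}\}$ of $K$, the set $R$ is analytic in $\mathbb{R}^{\mathbb{N}}$. The map $\Phi:\mathcal{P}(\mathbb{N})\To\mathbb{R}^{\mathbb{N}}$ given by $\Phi(A)=\chi_A$ is continuous for the usual Polish topology on $\mathcal{P}(\mathbb{N})$, hence Borel, so $\mathcal{B}=\Phi^{-1}(R)$ is an analytic subset of $\mathcal{P}(\mathbb{N})$. The only place demanding any care is the second inclusion above, i.e.\ recognizing that the trace on $\mathbb{N}$ of a continuous function which happens to be $\{0,1\}$-valued there forces that function to be the indicator of a clopen subset of $K$, necessarily of the form $\widehat{A'}$ with $A'\in\mathcal{B}$; granting this, the remainder is routine descriptive-set-theoretic bookkeeping.
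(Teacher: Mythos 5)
Your proof is correct and follows essentially the same route as the paper: both identify $\mathcal{P}(\mathbb{N})$ with $\{0,1\}^{\mathbb{N}}\subseteq\mathbb{R}^{\mathbb{N}}$ via characteristic functions and recognize $\mathcal{B}$ as the trace on $\{0,1\}^{\mathbb{N}}$ of the analytic set $\{(f(x_n))_{n}:f\in C(K)\}$ provided by Corollary~\ref{coranalytic1}, where the $x_n$ are the isolated points coming from the atoms $\{n\}$. The paper compresses this identification into one line, while you spell out the verification (the $x_n$ are isolated and dense, and a continuous function that is $\{0,1\}$-valued on this dense set is the indicator of a clopen set $\widehat{A'}$ with $A'\in\mathcal{B}$), which is exactly the content behind the paper's ``is sent exactly to'' claim.
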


\begin{proof}
	In such a Stone space there is a dense set of isolated points $\{x_n : n\in\mathbb{N}\}$ and, under the natural bijection of $\mathcal{P}(\mathbb{N})$ and $\{0,1\}^\mathbb{N}$, the algebra $\mathcal{B}$ is sent exactly to the set $\{(f(x_n))_{n\in\mathbb{N}}: f\in C(K)\}\cap \{0,1\}^\mathbb{N}$.
\end{proof}

%

These results are saying that it is only among compact spaces that have an analytic definition that sick compacta are to be found, while other compacta constructed by set-theoretic techniques should be ruled out. Along the rest of the section, we prove some generalizations of Corollary~\ref{coranalytic1}.

\begin{defn}
	Let $\mathcal{F}_\omega(K)$ be the least family of closed subsets of $K$ that contains the closed $G_\delta$ sets, and is stable under countable intersections and closures of countable unions. Sets in this family will be called \emph{$\sigma$-generated closed} sets.
\end{defn}

\begin{lem}\label{restrictiontosigmaclosedlemma}
	Let $K$ be sick and $T: C(K)\To E_x\subset E$ be a vector lattice isomorphism onto an ideal of a separable Banach lattice. If $L$ is a $\sigma$-generated closed subset of K, then for any $r\geq 0$, the  set $\{Tf : \|f|_L\|_\infty \leq r\}$ is an analytic subset of $E$.
\end{lem}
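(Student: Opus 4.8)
The plan is to induct on the construction of the family $\mathcal{F}_\omega(K)$: verify the statement for closed $G_\delta$ sets, then show the class of closed sets for which the lemma holds is closed under countable intersections and closures of countable unions. The underlying principle is that analytic sets are closed under countable intersections, countable unions, and continuous/Borel images, so as long as each operation on closed sets translates to an operation that preserves analyticity in $E$, the induction goes through.

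First, for the base case, let $L = \bigcap_m U_m$ be a closed $G_\delta$ set, with $U_m$ open. Using Urysohn's lemma, pick continuous functions $g_m : K \to [0,1]$ with $g_m = 1$ on $L$ and $\operatorname{supp}(g_m) \subset U_m$, chosen so that $\bigcap_m \{g_m = 1\} = L$. Then $\|f|_L\|_\infty \le r$ is not quite captured by a single $g_m$, but $\|f|_L\|_\infty \le r$ holds if and only if for every $\varepsilon > 0$ and every $m$, the function $(|f| - r)^+ \wedge g_m$ has sup-norm that can be forced small; more cleanly, $\|f|_L\|_\infty \le r$ iff $\inf_m \|(|f|-r)^+ \cdot g_m\|_\infty = 0$, i.e.\ iff for all $k \in \mathbb{N}$ there is $m$ with $\|(|f|-r)^+ \wedge \tfrac1k g_m\|_\infty \le \tfrac1k \cdot(\text{something})$. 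Translating through the isometry-up-to-constants between $T(C(K)) = E_x$ and $C(K)$: the condition $\|(|f|-r)^+ \wedge c\,g_m\|_\infty \le \delta$ corresponds, via $T$ and the lattice operations available in $E$, to the closed condition $(|Tf| - rx)^+ \wedge c\,T g_m \le \delta x$ in $E$ (here $Tg_m \in E_x$ is a fixed vector). Thus $\{Tf : \|f|_L\|_\infty \le r\}$ is a countable intersection over $k$ of countable unions over $m$ of sets of the form $\{y \in E_x : (|y| - rx)^+ \wedge c_k\, Tg_m \le \delta_k x\}$. Each such set is the intersection of a closed subset of $E$ with $E_x = \bigcup_n\{y : |y| \le nx\}$, hence Borel, in fact $F_\sigma$; so the whole thing is analytic (indeed Borel relative to $E_x$, but analytic is all we need). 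The one delicate point here is getting the quantifier structure exactly right so that the displayed condition is genuinely equivalent to $\|f|_L\|_\infty \le r$ — this requires that the $g_m$ are chosen with $L = \bigcap_m \operatorname{int}\{g_m = 1\}$ or similar, and a short compactness argument.

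For the inductive step with countable intersections, if $L = \bigcap_j L_j$ with each $L_j$ satisfying the lemma, then $\{Tf : \|f|_L\|_\infty \le r\} = \{Tf : \|f|_{\bigcap_j L_j}\|_\infty \le r\}$, and since $\|f|_{\bigcap L_j}\|_\infty = \inf$ over finite subsets... no — rather, one uses that $\|f|_L\|_\infty \le r$ iff for every $\varepsilon>0$ there is $j$ with $\|f|_{L_j}\|_\infty \le r + \varepsilon$? That is false in general. The correct route: $\{Tf : \|f|_L\|_\infty \le r\}$ for $L=\bigcap_j L_j$ — here I would instead observe $\bigcap_j L_j$ is again closed $G_\delta$-ish only if the $L_j$ are, so this needs care; the honest approach is that $\|f|_{\bigcap_j L_j}\|_\infty \le r$ iff $\bigwedge$ ... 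Actually the clean statement is: $\|f|_L\|_\infty\le r \iff \forall \varepsilon>0\ \exists$ finite $F$, $\exists j\in F$ with ... — I would fall back on the description of $\mathcal{F}_\omega$ via the functions $Tg$ directly, noting each $\sigma$-generated closed set $L$ carries an associated family of test vectors in $E_x$ whose common zero set (in the appropriate sense) picks out $L$, and analyticity propagates. For closures of countable unions, $L = \overline{\bigcup_j L_j}$: since $C(K)$ functions are continuous, $\|f|_{\overline{\bigcup_j L_j}}\|_\infty = \sup_j \|f|_{L_j}\|_\infty$, so $\{Tf : \|f|_L\|_\infty \le r\} = \bigcap_j \{Tf : \|f|_{L_j}\|_\infty \le r\}$, a countable intersection of analytic sets, hence analytic — this case is immediate.

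The main obstacle is the countable-intersection step and, relatedly, pinning down the exact equivalent first-order-with-countable-quantifiers description of the condition $\|f|_L\|_\infty \le r$ for a general $\sigma$-generated closed $L$. I expect the cleanest fix is to prove, by the same induction, the stronger statement that each $L \in \mathcal{F}_\omega(K)$ admits a countable family $\{h_i^L\}_{i} \subset C(K)_+$ such that $L = \bigcap_i \overline{\{h_i^L = 0\}}^{\,\circ}$ (or $L$ equals the set where all $h_i^L$ vanish, up to the usual closure subtleties), and then $\|f|_L\|_\infty \le r$ becomes a countable conjunction of conditions each manifestly translating to a Borel-in-$E_x$ condition on $Tf$ — at which point analyticity of $\{Tf : \|f|_L\|_\infty \le r\}$ follows from stability of analytic sets under countable intersections together with the fact, already used in Theorem~\ref{analyticmeasure}, that $E_x$ is $F_\sigma$ in $E$.
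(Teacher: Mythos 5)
Your outline handles two of the three pieces correctly: the closure-of-countable-unions step (via $\|f|_{\overline{\bigcup_j L_j}}\|_\infty=\sup_j\|f|_{L_j}\|_\infty$, so the set is a countable intersection of analytic sets) agrees with the paper, and your base case for closed $G_\delta$ sets, though more convoluted than necessary, can be pushed through (the paper does it more directly: pick $g\geq 0$ with $L=g^{-1}(0)$, note that $\{f: f|_L=0\}$ is the closed ideal generated by $g$, and write its $T$-image as $\{y\in E_x:\exists z_n\in E_x,\ |z_n|\leq nTg,\ \|z_n-y\|_\infty\to 0\}$, which is analytic because the transferred sup-norm is a Borel function on the $F_\sigma$ set $E_x$). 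The genuine gap is the countable-intersection step, which you yourself flag as ``the main obstacle'' and never resolve. The fix you propose -- that every $L\in\mathcal{F}_\omega(K)$ is the common vanishing set of a countable family $\{h_i\}\subset C(K)_+$ -- cannot serve: the common zero set of countably many continuous functions is the zero set of the single function $\sum_i 2^{-i}(|h_i|\wedge \mathbf{1})$, hence a closed $G_\delta$, so your ``stronger statement'' would assert that $\mathcal{F}_\omega(K)$ contains nothing beyond closed $G_\delta$ sets. That is exactly what the class is designed to transcend (closures of countable unions of closed $G_\delta$ sets need not be $G_\delta$), you give no argument for it, and the hedged alternative involving closures and interiors is too vague to assess. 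So the induction does not close.

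The paper's resolution is worth contrasting, because it explains a feature of the statement you never exploit: the parameter $r$. One first reduces everything to the case $r=0$ by means of the continuous map $\Phi(z)=(|z|-rx)\vee 0$ on $E_x$. For intersections, one first treats $L_1\cap L_2$, using that $\{f: f|_{L_1\cap L_2}=0\}$ is the closed ideal generated by $\{f_1+f_2: f_1|_{L_1}=0,\ f_2|_{L_2}=0\}$; the $T$-image of this set of sums is analytic, and the closed ideal it generates has an analytic image by the same ``dominated sequences converging in sup-norm'' description as in the $G_\delta$ case. This gives stability under finite intersections, so a general countable intersection may be assumed decreasing, and for decreasing $L_1\supset L_2\supset\cdots$ compactness yields that $f|_{\bigcap_n L_n}=0$ if and only if for every $m$ there is $n$ with $\|f|_{L_n}\|_\infty\leq 1/m$; analyticity then follows from the inductive hypothesis applied with $r=1/m>0$. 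Your remark that ``$\|f|_L\|_\infty\leq r$ iff for every $\varepsilon>0$ there is $j$ with $\|f|_{L_j}\|_\infty\leq r+\varepsilon$'' is indeed false for arbitrary countable families, but it is true for decreasing ones; the two ingredients you are missing are precisely the reduction to decreasing families through the finite-intersection lemma and the use of the full statement (arbitrary $r\geq 0$) as the inductive hypothesis.
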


\begin{proof}
	The first observation is that it suffices to prove that the set $\{Tf : f|_L = 0\}$ is analytic. This is because $\|f|_L\|_\infty \leq r$ if and only if $(|f|-r\mathbf{1})\vee 0|_L=0$, so if we consider the function $\Phi:E_x\To E_x$ given by $\Phi(z) = (|z|-rx)\vee 0$, then
	$$\{Tf : \|f|_L\|_\infty \leq r\} = \Phi^{-1}\{Tf : f|_L = 0\}.$$ Since $E_x$ is an $F_\sigma$-set and $\Phi$ is continuous, the analyticity of $\{Tf : \|f|_L\|_\infty \leq r\}$ follows from that of $\{Tf : \|f|_L\|_\infty =0\}$. 
	
	Let us prove the case when $L$ is a closed $G_\delta$ set. In that case, there exists $g\in C(K)$ such that $g\geq 0$ and $L = g^{-1}(0)$. Then $\{f : f|_L = 0\}$ is the closed ideal generated by $g$ in $C(K)$. So,
	$$\{Tf : f|_L = 0\} = \{y\in E_x : \exists z_1,z_2,\ldots\in E_x : |z_n|\leq nTg, \lim_n\|z_n-y\|_\infty = 0\}.$$
	Note that $\|\cdot\|_\infty$ is a Borel function on the closed set $E_x\subset E$, because its graph is a Borel set. So we conclude that the set $\{Tf : f|_L = 0\}$ is analytic as desired. It is enough to prove now that the family of all closed $L$ such that $\{Tf : f|_L = 0\}$ (and hence also $\{Tf : \|f|_L\|_\infty \leq r\}$) is analytic is stable under countable intersections and closures of countable unions. The latter is obvious because $$\left\{Tf : f|_{\overline{\bigcup_n L_n}} = 0\right\} = \bigcap_n \{Tf : f|_{L_n} = 0\}.$$
	Concerning intersections, let us start with an intersection of just two $L_1\cap L_2$. In this case, the observation is that $\{f : f|_{L_1\cap L_2} = 0\}$ is the closed ideal generated by the set $\{f_1+f_2 : f_1|_{L_1}=0\text{ and }f_2|_{L_2}=0\}.$ The image under $T$ of that set of sums would be analytic, and by the same description as before, the image under $T$ of the closed ideal it generates would also be analytic. From this we deduce that our family is closed under finite intersections. Finally, it is enough to check that the family is stable under decreasing intersections. But if $L_1\supset L_2\supset\cdots$, then we have that
	$$\{f : f|_{\bigcap_n L_n} = 0\} = \left\{f : \forall m\in\mathbb{N}\ \exists n\in\mathbb{N}\ \|f|_{L_n}\|_\infty \leq 1/m \right\}, $$
	and since we assume that each set $A_{n,m} = \{f :\|f|_{L_n}\|_\infty \leq 1/m\}$ is analytic, we conclude that
	$$\{Tf : f|_{\bigcap_n L_n} = 0\} = \{y\in E_x : \forall m\in\mathbb{N}\ \exists n\in\mathbb{N}\ y\in TA_{n,m}\} $$
	is an analytic set.
\end{proof}

\begin{thm}
	Let $\{D_n\}_n$ be a family of $\sigma$-generated closed subsets of a sick compactum. Then the following is an analytic set:
	$$S = \left\{(t_n)\in \mathbb{R}^\mathbb{N} : \exists f\in C(K) : f|_{D_n} \text{ is constantly equal to }t_n\right\}.$$
\end{thm}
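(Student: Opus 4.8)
The plan is to exhibit $S$ as the projection onto $\mathbb{R}^{\mathbb{N}}$ of an analytic subset of $\mathbb{R}^{\mathbb{N}}\times E$, and then to reduce the analyticity of that subset to Lemma~\ref{restrictiontosigmaclosedlemma}. Fix a vector lattice isomorphism $T:C(K)\To E_x\subset E$ onto an ideal of a separable Banach lattice, and set $x=T\mathbf{1}$; recall that $T$ is injective and that, $K$ being compact, every constant function lies in $C(K)$ with $T(t\mathbf{1})=tx$. Consider
$$\widetilde{S}=\left\{\big((t_n)_n,Tf\big)\in\mathbb{R}^{\mathbb{N}}\times E : f\in C(K),\ f|_{D_n}\text{ is constantly }t_n\text{ for every }n\right\}.$$
Then $S$ is the image of $\widetilde{S}$ under the (continuous) coordinate projection $\mathbb{R}^{\mathbb{N}}\times E\To\mathbb{R}^{\mathbb{N}}$, so, as continuous images of analytic sets are analytic, it suffices to prove that $\widetilde{S}$ is analytic.

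I would next decompose $\widetilde{S}$ according to the individual constraints. For a $\sigma$-generated closed set $L$ put
$$R_L=\left\{(t,Tf)\in\mathbb{R}\times E : f\in C(K),\ (f-t\mathbf{1})|_L=0\right\},$$
and let $\widetilde{S}_n$ be the preimage of $R_{D_n}$ under the continuous map $\big((t_m)_m,y\big)\mapsto (t_n,y)$ from $\mathbb{R}^{\mathbb{N}}\times E$ to $\mathbb{R}\times E$. Since $T$ is injective, an element $y=Tf$ determines $f$ uniquely, so membership of $\big((t_m)_m,y\big)$ in $\bigcap_n\widetilde{S}_n$ forces a single $f\in C(K)$ with $f|_{D_n}$ constantly $t_n$ for all $n$; hence $\widetilde{S}=\bigcap_n\widetilde{S}_n$. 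As a countable intersection of analytic sets is analytic, and preimages of analytic sets under Borel maps of standard Borel spaces are analytic, it remains to prove that $R_L$ is analytic for every $\sigma$-generated closed set $L$.

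For this, the point is to convert the free scalar $t$ into a fixed target by a translation. The set $E_x$ is $F_\sigma$, hence Borel, in $E$, so $\mathbb{R}\times E_x$ is a Borel subset of the Polish space $\mathbb{R}\times E$. Define the continuous map $\Psi:\mathbb{R}\times E_x\To E_x$ by $\Psi(t,z)=z-tx$. If $z=Tf$ with $f\in C(K)$, then $\Psi(t,z)=T(f-t\mathbf{1})$, so $\Psi(t,z)$ belongs to $G_L:=\{Tg : g\in C(K),\ g|_L=0\}$ exactly when $(f-t\mathbf{1})|_L=0$; therefore $R_L=\Psi^{-1}(G_L)$. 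By Lemma~\ref{restrictiontosigmaclosedlemma} applied with $r=0$ (equivalently, by the intermediate step of its proof), $G_L$ is an analytic subset of $E$. Since $\Psi$ is a continuous, hence Borel, map defined on a Borel subset of a Polish space and $G_L$ is analytic, $R_L=\Psi^{-1}(G_L)$ is analytic, which completes the argument.

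The proof is essentially routine once Lemma~\ref{restrictiontosigmaclosedlemma} is available; the only subtlety, and the step I expect to require the most care, is that the scalars $t_n$ must be treated as free variables rather than as fixed parameters, which is precisely why one cannot invoke the lemma verbatim and instead precomposes with the translation $\Psi$. A minor bookkeeping point is that $T^{-1}$ should only ever be evaluated inside its domain $E_x$, which causes no trouble here since every point of $G_L$, and every $E$-coordinate occurring in $\widetilde{S}$, already lies in $E_x$.
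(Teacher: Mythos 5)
Your argument is correct, but it takes a genuinely different route from the paper's. You treat the witness $f$ as an extra coordinate: you encode it through $Tf\in E$, observe that the constraints decouple coordinatewise once you translate by $t_nx$ (so that ``$f|_{D_n}$ constantly equal to $t_n$'' becomes ``$T(f-t_n\mathbf{1})\in G_{D_n}$'' with $G_{D_n}=\{Tg: g\in C(K),\ g|_{D_n}=0\}$), apply Lemma~\ref{restrictiontosigmaclosedlemma} with $r=0$ to each $G_{D_n}$, and conclude by the standard closure properties of analytic sets (Borel preimages, countable intersections, continuous images); your identification $\widetilde S=\bigcap_n\widetilde S_n$ is indeed justified by the injectivity of $T$, and the translation trick correctly converts the free scalar into the fixed target $0$. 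The paper instead first proves a purely topological characterization of $S$: a sequence $(t_n)$ belongs to $S$ if and only if for all rationals $p<q$ the sets $\overline{\bigcup\{D_n:t_n\le p\}}$ and $\overline{\bigcup\{D_n:t_n\ge q\}}$ are disjoint, the nontrivial direction being a Urysohn/Tietze-type construction of $f$ from the separation data; it then reduces analyticity of $S$, via a measurable discretization $(t_n)\mapsto(\{n:t_n\le p\},\{n:t_n\ge q\})$, to analyticity of the single set $S'$ of pairs $(A,B)\in\mathcal{P}(\mathbb{N})^2$ whose associated unions can be separated by a continuous function, which is where Lemma~\ref{restrictiontosigmaclosedlemma} enters. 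Your projection argument is shorter and bypasses the separation claim entirely (it also handles empty $D_n$ and unbounded sequences $(t_n)$ without extra care, whereas the paper's claim implicitly assumes the $D_n$ nonempty when writing $S\subseteq\ell_\infty$); what the paper's detour buys is the intrinsic topological description of $S$ --- membership depends only on which subfamilies of $\{D_n\}$ can be separated --- together with a reduction to a single analytic set $S'$ that does not involve the scalars at all.
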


\begin{proof}
	Let us start with  the following
	
	\emph{Claim:}	$$ S = \left\{(t_n)\in \ell_\infty : \forall\ p<q\
	\overline{\bigcup\{D_n : t_n\leq p\}} \cap \overline{\bigcup\{D_n : t_n\geq q\}} = \emptyset
	\right\}. $$
	
	\emph{Proof of the claim.} The inclusion $[\subseteq]$ is obvious.  For the converse, suppose that $(t_n)$ satisfies the property above to be in the right-hand side set $S'$. For each $x\in \overline{\bigcup_n D_n}$, the set $$R_x : = \bigcap\left\{ [a,b] : x\in \overline{\bigcup_{t_n\geq a}D_n} \cap  \overline{\bigcup_{t_n\leq b}D_n}\right\}$$ is a singleton. Indeed, for an intersection of closed intervals to be empty we would need to have among them at least two disjoint intervals $[a,p]$ and $[q,b]$ with $p<q$, and we would get that $x\in \overline{\bigcup\{D_n : t_n\leq p\}} \cap \overline{\bigcup\{D_n : t_n\geq q\}} = \emptyset$, a contradiction. We cannot have two elements $p<q$ inside $R_x$. Otherwise, assume that $\{t_n\}$ is included inside the interval $[a,b]$ and $p<p'<q'<q$, then we will have that $x\in \overline{\bigcup_n D_n} = \overline{\bigcup_{t_n\leq q'}D_n}\cup \overline{\bigcup_{t_n\geq p'}D_n}$. Say, for instance, that $x\in \overline{\bigcup_{t_n\leq q'}D_n}$. Then $x\in\overline{\bigcup_{t_n\geq a}D_n}\cap\overline{\bigcup_{t_n\leq q'}D_n}$, which contradicts that $q\in R_x$. Similarly if $x\in \overline{\bigcup_{t_n\geq p'}D_n}$, it contradicts that $p\in R_x$. Thus, we have a well defined function $f:\overline{\bigcup_{n}D_n}\To \mathbb{R}$ in such a way that $\{f(x)\}=R_x$. It is clear that $f|_{D_n}$ is constant equal to $t_n$. By Tietze's extension theorem, it is enough to check that $f$ is continuous. The argument follows a similar line as before, if $f(x)=p<q$ and $p<p'<q'<q$ then
	$$x\not\in \overline{\bigcup_{t_n\geq p'}D_n} \Rightarrow x\in \overline{\bigcup_{t_n\leq q'}D_n} \Rightarrow f(x)\leq q' \Rightarrow f(x)<q,$$
	so $K\setminus \overline{\bigcup_{t_n\geq p'}D_n}$ is a neighborhood of $x$ where $f(x)<q$. An analogous argument works for the opposite inequality.

	In the description of $S$ in the above Claim one may demand that $p$ and $q$ are rational. So it is enough to show that for all $p<q$ the set
	$$ S_{p,q}= \left\{(t_n)\in \mathbb{R}^\mathbb{N} : \
	\overline{\bigcup\{D_n : t_n\leq p\}} \cap \overline{\bigcup\{D_n : t_n\geq q\}} = \emptyset
	\right\} $$
	is analytic. For this, it is enough to show that
	$$ S' = \left\{(A,B)\in \mathcal{P}(\mathbb{N})\times \mathcal{P}(\mathbb{N})  : \
	\overline{\bigcup\{D_n : n\in A\}} \cap \overline{\bigcup\{D_n :n\in B\}} = \emptyset
	\right\} $$
	is analytic, because $S_{p,q}$ is the preimage of $S'$ under the measurable assignment $$(t_n) \mapsto (\{n : t_n\leq p\},\{n : t_n\geq q\}).$$ But $S'$ can be written as
	$$S' = \left\{(A,B)  : \ \exists f\in C(K) \ f|_{D_n}=0 \text{ for }n\in A, (1-f)|_{D_n} = 0 \text{ for }n\in B \right\}. $$
	This is analytic by Lemma~\ref{restrictiontosigmaclosedlemma}.
\end{proof}

We call a point $x\in K$ $\sigma$-generated if $\{x\}$ is a $\sigma$-generated closed set. Every $G_\delta$ point is $\sigma$-generated. 

\begin{cor}\label{analyticrestrictionGdeltapoints}
	If $D$ is a countable set of $\sigma$-generated points of a sick compactum $K$, then $\{f|_D : f\in C(K)\}$ is an analytic subset of $\mathbb{R}^D$.
\end{cor}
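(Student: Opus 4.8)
The plan is to deduce this directly from the preceding theorem, by taking the family $\{D_n\}_n$ to be the singletons $\{d\}$ for $d\in D$. Enumerate $D=\{d_n : n\in\mathbb{N}\}$; since each $d_n$ is a $\sigma$-generated point, each singleton $D_n:=\{d_n\}$ is a $\sigma$-generated closed subset of $K$, so the theorem applies. First I would observe that when $D_n$ is a singleton, the condition ``$f|_{D_n}$ is constantly equal to $t_n$'' simply means $f(d_n)=t_n$, so the analytic set $S$ produced by the theorem becomes
$$S = \left\{(t_n)_n\in\mathbb{R}^\mathbb{N} : \exists f\in C(K)\ \forall n\ f(d_n)=t_n\right\},$$
which is exactly the set of restrictions $\{f|_D : f\in C(K)\}$ under the identification of $\mathbb{R}^D$ with $\mathbb{R}^\mathbb{N}$ via the enumeration.

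The only genuine content beyond citing the theorem is the preliminary remark that $G_\delta$ points (and more generally any point whose singleton is a closed $G_\delta$) are $\sigma$-generated, which is immediate from the definition of $\mathcal{F}_\omega(K)$, since closed $G_\delta$ sets are by definition in that family. Since this is stated in the sentence preceding the corollary, nothing further is needed there. So the proof is essentially one line: apply the previous theorem to the family of singletons $\{\{d\} : d\in D\}$.

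I do not expect any real obstacle. The one point to be slightly careful about is purely bookkeeping: the set in the statement lives in $\mathbb{R}^D$ while the theorem is phrased for $\mathbb{R}^\mathbb{N}$, so one should explicitly fix a bijection $\mathbb{N}\to D$ and note that it induces a homeomorphism $\mathbb{R}^\mathbb{N}\to\mathbb{R}^D$ carrying the analytic set $S$ onto $\{f|_D : f\in C(K)\}$; analyticity is preserved under homeomorphisms of Polish spaces, so we are done.
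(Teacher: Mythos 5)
Your proposal is correct and is exactly the argument the paper intends: the corollary is stated without proof precisely because it follows by applying the preceding theorem to the family of singletons $\{d_n\}$, $d_n\in D$, which are $\sigma$-generated closed sets by hypothesis. The identification of $\mathbb{R}^D$ with $\mathbb{R}^\mathbb{N}$ via an enumeration is the only bookkeeping needed, as you note.
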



Notice that Corollary~\ref{analyticrestrictionGdeltapoints} does not hold if the assumption on the points of $D$ is removed. The compact space $\beta\mathbb{N}$ is sick. If we fix a nonprincipal ultrafilter $\mathcal{U}$, then it is a non $G_\delta$-point and taking $D=\mathbb{N}\cup \{\mathcal{U}\}$, the set $\{f|_D : f\in C(K)\} = \{(x_1,x_2,\ldots,x_\mathcal{U}) : x_\mathcal{U} = \lim_\mathcal{U} x_n\}$ is not analytic by the classical result of Sierpi\'{n}ski \cite{nonmeasurableultrafilter} that nonprincipal ultrafilters are not measurable. We remind an argument: if $\mathcal{U}$ is measurable in the product probability space $\{0,1\}^\mathbb{N}$, then it should have probability $1/2$ since its complement coincides with its $0-1$ switch, but by Kolmogorov's zero-one law its probability must be either $0$ or $1$. \\



\section{A disgression: Analytic measures on Rosenthal compacta}\label{SectionRosenthal}

The results of the previous section remind us of Godefroy's characterization of Rosenthal compacta \cite{Godefroy}: A separable compact space $K$ is a Rosenthal compactum if and only if $\{f|_D : f\in C(K)\}$ is an analytic subset of $\mathbb{R}^D$ for all countable dense subsets $D$ of $K$. The word \emph{dense} may be removed since the property of being Rosenthal is inherited by closed subspaces. Another fact first claimed by Bourgain, with proofs given by Todorcevic \cite{Tod1BC}, Marciszewski and Plebanek \cite{MarPle} and Plebanek and Sobota \cite{PlebSob} is that all measures on a Rosenthal compactum are of countable type. The following puts together both results.

\begin{thm}
	Every measure $\mu$ on a Rosenthal compactum $K$ is analytic. A separable compactum $K$ is Rosenthal if and only if all measures on $K$ are analytic.
\end{thm}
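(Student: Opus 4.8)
The statement has two parts. The first --- that every measure $\mu$ on a Rosenthal compactum $K$ is analytic --- combines two known facts: (a) every such $\mu$ is of countable type (Bourgain; Todorcevic \cite{Tod1BC}, Marciszewski--Plebanek \cite{MarPle}, Plebanek--Sobota \cite{PlebSob}), and (b) Godefroy's characterization \cite{Godefroy}. The plan is to use (a) to reduce to showing that the image $i(C(K))\subset L_1(\mu)$ is analytic. Fix a countable dense $D\subset K$ (possible since $K$ is separable). The key observation is that the restriction map $\rho:C(K)\to\mathbb{R}^D$, $f\mapsto f|_D$, is injective (by density and continuity), and that one can reconstruct the $L_1(\mu)$-class of $f$ from $f|_D$ in a Borel way: for a bounded Borel function $g$ on $K$ agreeing with $f$ on $D$, the $L_1$-class of $f$ is determined; more carefully, one builds a Borel map from the Godefroy set $G_D:=\{f|_D : f\in C(K)\}\subset\mathbb{R}^D$ into $L_1(\mu)$ whose image is $i(C(K))$. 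Since $\mu$ is of countable type, $L_1(\mu)$ is a Polish space, and since $G_D$ is analytic by Godefroy's theorem, its Borel image $i(C(K))$ is analytic. Together with countable type this is exactly the definition of $\mu$ being analytic.

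For the equivalence in the second sentence: one direction is immediate from the first sentence (Rosenthal $\Rightarrow$ all measures analytic). For the converse, suppose $K$ is a separable compactum on which every measure is analytic; we must show $K$ is Rosenthal, and by Godefroy's criterion it suffices to show $\{f|_D : f\in C(K)\}$ is analytic in $\mathbb{R}^D$ for some (equivalently every) countable dense $D$. Here the natural move is to pick a strictly positive measure $\mu$ of countable type on $K$ --- but wait, this is exactly the subtlety: an arbitrary separable compactum need not carry a strictly positive measure of countable type, and ``every measure is analytic'' is a vacuous-looking hypothesis if $K$ carries few measures. The resolution I expect is that separability already forces the existence of enough measures: take $D=\{x_n\}$ dense and let $\mu=\sum_n 2^{-n}\delta_{x_n}$, which is a strictly positive measure of countable type, hence $L_1(\mu)$ is Polish and $i:C(K)\to L_1(\mu)$ is essentially the restriction map to $D$ (composed with a homeomorphic rescaling onto a subset of $\mathbb{R}^D$). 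By hypothesis $\mu$ is analytic, so $i(C(K))$ is analytic in $L_1(\mu)$, hence $\{f|_D:f\in C(K)\}$ is analytic in $\mathbb{R}^D$, and Godefroy's theorem gives that $K$ is Rosenthal.

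The main obstacle, and the step I would spend the most care on, is the first reduction: showing that ``$\mu$ of countable type'' plus ``$G_D$ analytic in $\mathbb{R}^D$'' yields ``$i(C(K))$ analytic in $L_1(\mu)$''. The point is to produce an honest Borel (or at least Souslin-measurable) map witnessing the passage from a function's values on the countable dense set $D$ to its class in $L_1(\mu)$, and then invoke that Borel images of analytic sets in Polish spaces are analytic. One clean way: since $D$ is dense, for $f\in C(K)$ the value $f(x)$ at any $x\in K$ is a limit of $f(x_n)$ along any sequence $x_n\to x$ in $D$, so $f$ is determined pointwise from $f|_D$ by a Baire-class-one-type formula; integrating against $\mu$ (dominated convergence, using a fixed uniform bound that one stratifies over) gives the $L_1(\mu)$-class as a Borel function of $f|_D$ on each set $\{f:\|f\|_\infty\le m\}$, and one takes a countable union over $m$. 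I also need to double-check that in the converse direction the identification of $i(C(K))\subset L_1(\mu)$ with a subset of $\mathbb{R}^D$ is by a Borel isomorphism onto its image (it is: the map $L_1(\mu)\to\mathbb{R}^D$ sending an $L_1$-class to its values at the atoms $x_n$, divided by $\mu\{x_n\}$, is continuous, and restricted to the continuous functions it is injective), so that analyticity transfers correctly. The remaining bookkeeping --- regularity, that $L_1(\mu)$ is Polish for $\mu$ of countable type, and that restriction maps are continuous --- is routine.
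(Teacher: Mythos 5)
Your converse direction is essentially the paper's argument (discrete measure on a countable dense set, transfer of analyticity from $L_1(\mu)\cong\ell_1$ to $\mathbb{R}^D$, then Godefroy's characterization), and that part is fine. The problem is the first statement, and it sits exactly at the step you flag as ``the main obstacle'': you never actually prove that the reconstruction map $G_D\to L_1(\mu)$, $f|_D\mapsto [f]_\mu$, is Borel, and the sketch you give does not yield it. The parenthetical claim that a bounded Borel function agreeing with $f$ on $D$ determines the $L_1(\mu)$-class is false when $\mu(D)=0$. The ``Baire-class-one-type formula'' plus dominated convergence needs, for $\mu$-almost every $x$, a choice of a sequence in $D$ converging to $x$ that is \emph{measurable in $x$} (so that the approximating functionals $f\mapsto\int f(d_n(x))h(x)\,d\mu(x)$ are countably many discrete measures on $D$); the mere existence of such sequences for each fixed $x$ is already the Bourgain--Fremlin--Talagrand Fr\'echet--Urysohn theorem in the nonmetrizable case, and the measurable selection is a further genuine issue. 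Note also that the map is not even sequentially continuous: on $K=[0,1]$, $D=\mathbb{Q}$, $\mu$ Lebesgue, one can build $f_n\in B_{C(K)}$ vanishing at the first $n$ rationals with $\int f_n\,d\mu\geq 1/4$, so pointwise convergence on $D$ of bounded sequences does not give convergence in $L_1(\mu)$. What you need is precisely that every measure, viewed as a functional on bounded subsets of $C(K)$ with the topology of pointwise convergence on $D$, is Borel --- and this is, in substance, the content of Godefroy's Theorem~9 (\cite{Godefroy}): for separable Rosenthal $K$, the space $(C(K),{\rm cyl}_{C(K)})$ is Suslinean. The paper's proof invokes that theorem and then only needs that the formal identity $i:C(K)\To L_1(\mu)$ is cylinder-measurable and that cylindrical and Borel $\sigma$-algebras coincide on separable Banach spaces; your plan implicitly assumes the hard measurability fact rather than proving it.

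A second, smaller gap: the first statement is about arbitrary Rosenthal compacta, but you immediately fix a countable dense $D$ ``since $K$ is separable''. Countable type of $\mu$ does not by itself give a separable support (the Stone space $\mathfrak{M}$ of the Lebesgue measure algebra carries a strictly positive measure of countable type and is nonseparable), so the reduction to the separable case has to be justified; the paper does this by citing \cite[Proposition 8]{Godefroy}, after first quoting the theorem of Todorcevic, Marciszewski--Plebanek and Plebanek--Sobota that measures on Rosenthal compacta are of countable type. So: keep your converse as is, but for the direct implication either supply a proof of the Borel measurability of $f|_D\mapsto [f]_\mu$ (which will amount to reproving Godefroy's Theorem~9) or argue as the paper does, via the Suslinean property of $(C(K),{\rm cyl}_{C(K)})$ together with the reduction to separable $K$.
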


\begin{proof}
	We start proving the first statement. We take a measure $\mu$ on $K$ that we know that is of countable type as have been mentioned above. By \cite[Proposition 8]{Godefroy} one can assume that the compact space $K$ is separable. By \cite[Theorem 9]{Godefroy}, we have that $(C(K),{\rm cyl}_{C(K)})$ is Suslinean. Here, the cylindrical $\sigma$-algebra ${\rm cyl}_X$ on a Banach space $X$ is the least $\sigma$-algebra that makes all bounded linear functionals measurable. A measurable space is Suslinean if it is isomorphic to an analytic space with its Borel $\sigma$-algebra. Look at the formal identity $i:C(K)\To L_1(\mu)$. Since it is a bounded operator, it is ${\rm cyl}_{C(K)}$-to-${\rm cyl}_{L_1(\mu)}$-measurable. But in every separable Banach space, the cylindrical and the Borel $\sigma$-algebra coincide. So, $i(C(K))$, endowed with the Borel structure inherited from $L_1(\mu)$ is the image of a Suslinean space under a measurable function. We conclude that $i(C(K))$ is analytic as desired. Finally, we look at the second statement. Suppose that $K$ is a separable compact space where all measures are analytic. Let $D=\{x_n : n=1,2,3,\ldots\}$ be any countable subset of $K$. Consider a discrete probability measure on $K$ with $\mu\{x_n\} = 2^{-n}$, that must be analytic. Reproducing the proof of Corollary~\ref{coranalytic1}, this implies that $\{f|_D : f\in C(K)\}$ is an analytic subset of $\mathbb{R}^D$. By Godefroy's characterization, we just proved that $K$ is Rosenthal.
\end{proof}


\section{Omnipresence of $\beta\mathbb{N}$}
\label{SectionCopiesbetaN}

\begin{thm}
	\label{TheoDecompositionSickMetrizbleBetaN}
	Let $K$ be a sick compactum. There exists an increasing sequence $K_n\subseteq K$ of metrizable closed (possibly empty) subspaces of $K$ such that if $\{x_n\}_n$ is a discrete sequence of points in $K$ with $x_n \notin K_n$ for every $n \in \N$, then $\overline{\{x_n\}_n}$ is a copy of $\betaN$. 
\end{thm}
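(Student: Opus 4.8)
The plan is to work with the representation of the separable Banach lattice $E$ as an ideal of continuous functions $f\colon K\to\mathbb R\cup\{-\infty,+\infty\}$ with $f^{-1}(\mathbb R)$ dense, as recalled in the introduction (via \cite{Wickstead92}), so that $C(K)=E_x$ sits inside $E$ and $E$ is generated as a closed ideal by the constant one $x$. Fix a dense sequence $(y_k)_k$ in $E$, which we may take to consist of positive vectors, and set $K_n=\bigcap_{k\le n}\{t\in K: y_k(t)\ \text{takes a finite value at}\ t\}^{c}$-type sets; more precisely, I would let $K_n$ be a suitable closed metrizable set coming from the first $n$ of the $y_k$'s, for instance the (closure of the) set where some $y_k$, $k\le n$, is ``infinite'' together with level sets needed to make a countable family of continuous real-valued functions separate points. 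The guiding intuition: a separable Banach lattice $E$ sits in the $\sigma$-Dedekind-complete lattice of such extended functions, and where all the dense $y_k$ are finite and the restrictions $y_k|_L$ separate points, $L$ is metrizable; the obstruction to $\beta\mathbb N$-behaviour of a discrete sequence is precisely that the points accumulate onto such a metrizable piece.

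The core of the argument is a disjointification/boundedness dichotomy. Given a discrete sequence $(x_n)$ with $x_n\notin K_n$, I would first produce, using that each $x_n$ avoids $K_n$, positive vectors $e_n\in E$ (built from the $y_k$ with $k\le n$, rescaled) that are pairwise disjoint in the lattice sense, with $e_n(x_n)>0$ and $e_n$ vanishing near $x_m$ for $m\ne n$; here one uses regularity/Urysohn-type separation inside $K$ together with the defining property of $K_n$ to guarantee the $e_n$ can be chosen genuinely disjoint. The key claim is then: for every $A\subseteq\mathbb N$, the supremum $\sup_{n\in A} e_n$ exists in $E$ (equivalently, $(e_n)_{n\in A}$ is order bounded). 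This is where the hypothesis $x_n\notin K_n$ is used decisively — one shows the pointwise supremum of the $e_n$ over $A$ is a continuous extended function that is finite exactly where it must be, and that it actually lies in $E$ because $E$, being a separable (hence order-continuous-modulo-an-ideal, or at least sufficiently complete) Banach lattice represented this way, contains the relevant suprema; alternatively one exhibits an explicit bound $u\in E_+$ with $e_n\le u$ for all $n$, assembled from the $y_k$'s. Granting this, the closed sublattice generated by $\{e_n:n\in\mathbb N\}$ together with its least upper bound is lattice-isomorphic to $c=C(\omega+1)$'s analogue, namely to $\ell_\infty=C(\beta\mathbb N)$: the map $A\mapsto \sup_{n\in A}e_n$ is a Boolean embedding of $\mathcal P(\mathbb N)$, so the clopen algebra of $\overline{\{x_n\}}$ is all of $\mathcal P(\mathbb N)$, forcing $\overline{\{x_n\}}\cong\beta\mathbb N$.

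To finish, I would convert the ``all suprema exist'' statement into the topological conclusion: the map sending $A\subseteq\mathbb N$ to the (closed-open) set $\overline{\{x_n: n\in A\}}\cap\overline{\{x_n:n\in\mathbb N\}}$ is a well-defined Boolean isomorphism onto the clopen algebra of $L:=\overline{\{x_n\}}$, which combined with compactness of $L$ and density of $\{x_n\}$ in $L$ yields a homeomorphism $L\cong\beta\mathbb N$ by the universal property of $\beta\mathbb N$ (every bounded continuous function on the discrete set $\{x_n\}$ extends). For the final sentence of the theorem statement — that nonmetrizable sick compacta contain copies of $\beta\mathbb N$ — I would argue contrapositively: if $K$ contains no copy of $\beta\mathbb N$, then in particular no such discrete sequence $(x_n)$ with $x_n\notin K_n$ can exist, so $K=\bigcup_n K_n$ is a countable increasing union of closed metrizable subspaces; a $ccc$ compact space (sick compacta are $ccc$) that is a countable union of closed metrizable subspaces is metrizable (by a Baire-category argument, one $K_n$ has nonempty interior, and then a standard bootstrapping using $ccc$ and regularity yields metrizability of $K$).

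The main obstacle I anticipate is the key claim that $\sup_{n\in A}e_n$ exists in $E$ for every $A$: a general separable Banach lattice is not Dedekind complete, so one cannot simply invoke completeness. The argument must genuinely exploit how $K_n$ was defined — namely that avoiding $K_n$ controls the behaviour of the first $n$ dense vectors near $x_n$ — to build an explicit order bound $u\in E_+$ as a norm-convergent sum $\sum_n 2^{-n}e_n/\|e_n\|$ or similar, and then check that the pointwise supremum of any subfamily is dominated by $u$ and is itself continuous as an extended-real function, hence (being between $0$ and $u\in E$ in the ambient function lattice and realized by an element of the principal ideal $E_u$) lies in $E$. Making the choice of $K_n$ and of the $e_n$ mesh so that this works uniformly in $A$ is the delicate technical heart of the proof; the rest is bookkeeping with Urysohn's lemma, Tietze's theorem, and the universal property of $\beta\mathbb N$.
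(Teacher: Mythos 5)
Your overall architecture (disjoint bump elements at the $x_n$, ``every subfamily has a supremum in the ideal'', hence every bounded function on $\{x_n\}$ extends and the closure is $\beta\mathbb{N}$) matches the endgame of the paper's proof, but the heart of the argument is missing, and you have correctly identified where: nothing in your construction actually yields the existence of the suprema $\sup_{n\in A}e_n$. Two concrete problems. First, your candidate order bound $u=\sum_n 2^{-n}e_n/\|e_n\|$ does not dominate the $e_n$ themselves, only their small multiples; and even a genuine order bound would not suffice, since a separable Banach lattice need not be Dedekind $\sigma$-complete (e.g.\ $C[0,1]$), so the phrase ``$E$ \dots{} contains the relevant suprema'' is exactly the unproved point, not a consequence of separability or of the extended-function representation. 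Second, your $K_n$ (sets where the first $n$ vectors of a dense sequence are ``infinite'', plus level sets making functions separate points) carries no quantitative information about the sequence $(x_n)$, so avoiding $K_n$ gives no control on the bump elements $e_n$; it is also not clear that these sets are closed and metrizable.

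The paper's mechanism, which your sketch does not reach, is to define $N(t)=\inf\{\|Tf\| : f\in B_{C(K)},\ f(t)=1\}$, the cheapest $E$-norm of a norm-one bump at $t$, and to take $K_n=\{t : N(t)\ge 2^{-n}\}$. Metrizability of these level sets comes from a quotient argument: with $\mathcal I=\{h : h|_{K_n}=0\}$ and $\mathcal J=\overline{T(\mathcal I)}$, the induced operator $C(K)/\mathcal I\to E/\mathcal J$ is bounded below by $2^{-n}$, so $C(K_n)$ embeds isomorphically into the separable space $E/\mathcal J$. And if $x_n\notin K_n$, then $N(x_n)<2^{-n}$, so after disjointifying (Urysohn, as you propose) one gets pairwise disjoint $h_n\in B_{C(K)}$ with $h_n(x_n)=1$ and $\sum_n\|Th_n\|<\infty$; for pairwise disjoint positive vectors with summable norms the partial sums converge in norm, the limit is the supremum of any subfamily, and it stays in the ideal $E_x$ because each partial sum is $\le x$. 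This summability is what replaces any appeal to order completeness, and it is precisely the ingredient your choice of $K_n$ cannot supply. (Your closing remarks about nonmetrizable sick compacta are not part of the statement; in the paper that is a separate corollary, proved by noting that a closed set meeting every $K\setminus K_n$ in an infinite set contains a discrete sequence with $x_n\notin K_n$, not by a Baire-category/ccc bootstrapping, which as stated you have not justified.)
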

\begin{proof}
	Let us fix a sick compactum $K$ and let $T:C(K)\To E$ be a vector lattice isomorphism of $C(K)$ onto a principal ideal $E_x$ of a separable Banach lattice $E$. We can assume that $\|x\|=1$. 
	We define the function $N:K \To [0,1]$ by the formula
	$$ N(t):= \inf \{\|Tf\|: f\in B_{C(K)},~f(t)=1\} \mbox{ for every } t\in K.$$
	\emph{Claim 1.} For any $\delta>0$, the set $$\widehat{K}_\delta=\{t\in K: N(t)\geq \delta \}$$
	is a metrizable closed subspace of $K$.\\
	
	\noindent
	\emph{Proof of the claim:}
	We prove first that $\widehat{K}_\delta$ is closed.
	Let $t \notin \widehat{K}_\delta$, i.e. such that $N(t)<\delta$. Take $\varepsilon =\frac{\delta-N(t)}{2}$ and fix $f\in B_{C(K)}$ such that $f(t)=1$ and $\|Tf\|<N(t)+\varepsilon$. Without loss of generality we can assume $f\geq0$. Let 
	$$U=\Big\{s\in K: f(s)>\frac{N(t)+\varepsilon}{\delta}\Big\}.$$ 
	Since $\frac{N(t)+\varepsilon}{\delta}<1$ and $f$ is continuous, $U$ is an open neighborhood of $t$. Let us show that $U\cap \widehat{K}_\delta =\emptyset$.
	Pick $s \in U$. The function  $g:=(\frac{1}{f(s)}f)\wedge 1 \in B_{C(K)}$ satisfies $g(s)=1$ and 
	$$\|Tg\| \leq \frac{1}{f(s)}\|Tf\| \leq \frac{1}{f(s)}(N(t)+\varepsilon) < \delta, $$
	so $s\notin \widehat{K}_\delta$ and $U\cap \widehat{K}_\delta =\emptyset$ as desired.

	We check now that $\widehat{K}_\delta$ is metrizable. Set
	$$ \cI = \{h \in C(K): h(s)=0 \mbox{ for every }s\in \widehat{K}_\delta \}.$$
	Note that $C(\widehat{K}_\delta)$ is isomorphic to $C(K)/\cI$. Furthermore, if we take $\cJ=\overline{T(\cI)} \subseteq E$, then $\cJ$ is a closed ideal of $E$ and $T$ induces an operator $\hat{T}:C(K)/\cI \To E/\cJ$ through the formula $\hat{T}\left({h}+\cI\right)={Th} + \cJ$.
	We claim that $\hat{T}$ is an isomorphism, so the metrizability of $\widehat{K}_\delta$ will follow from the separability of $E/\cJ$.
	Take any ${h} \in C(K)$ with $\| {h}+\cI\|=1$. We are going to show that $\|Th+\cJ\| \geq \delta $ and therefore $\hat{T}$ is an isomorphism. 
	Suppose by contradiction that $\|{T}h+\cJ\| <\delta $. By definition of $\cJ$, this is equivalent to the existence of a function $g\in \cI$ such that $\|Th-Tg\|=\|T(h-g)\|<\delta$.
	On the one hand, since $\|{h}+\cI\|=1$, we have that $|h(t)|\leq 1$ for every $t\in \widehat{K}_\delta$ and there exists $s\in \widehat{K}_\delta$ such that $|h(s)|=1$.
	On the other hand, it follows from the definition of $\cI$ that $g(s)=0$, so $|(h-g)(s)|=1$. Let $f:=|h-g|\wedge 1$. Then $f(s)=1$, $f \in B_{C(K)}$ and 
	$$\|Tf\| \leq \|T|h-g|\|=\|T(h-g)\|<\delta.$$ 
	But then $s \notin \widehat{K}_\delta$, which yields a contradiction and finishes the proof of Claim 1.
	\medskip
	
	Set now $K_n:=\widehat{K}_{1/{2^n}}$ for every $n\in \N$. It remains to show that if $\{x_n\}_n$ is a discrete sequence of points in $K$ with $x_n \notin K_n$ for every $n\in \N$ then $\overline{\{x_n\}_n}$ is a copy of $\betaN$. This is an immediate consequence of the following.\\
	
	\noindent
	\emph{Claim 2.} If $\{x_n\}_n$ is a discrete sequence of points in $K$ with $\sum_n N(x_n) < \infty$, then $\overline{\{x_n: n \in \N\}}$ is a copy of $\betaN$. \\
	
	\noindent
	\emph{Proof of the claim:} By definition of the funcion $N$, there exists a sequence of norm-one functions $\{f_n\}_n$ in $B_{C(K)}$ such that $f_n(x_n)=1$ and $\|Tf_n\|<N(x_n)+\frac{1}{2^n}$. Furthermore, since $\{x_n\}_n$ is discrete, we can take pairwise disjoint functions $g_n \in B_{C(K)}$ such that $g_n(x_n)=1$. Set $h_n=|g_nf_n|$ for every $n\in \N$. Then $\{h_n\}_n$ is a sequence of pairwise positive disjoint functions such that $h_n(x_n)=1$ and $$\|Th_n\|\leq \|Tf_n\|<N(x_n)+\frac{1}{2^n}.$$ 
	Since $\sum_n (N(x_n)+\frac{1}{2^n})<\infty$, all subsequences of $\{Th_n\}_n$ have a supremum, which indeed belongs to $E_x=T(C(K))$ because $Th_n \leq x$ for every $n\in \N$, so all subsequences of $\{h_n\}$ have a supremum in $B_{C(K)}$. 
	But this implies that any bounded function on $\{x_n:n\in \N\}$ extends to a continuous function on $\overline{\{x_n: n \in \N\}}$, so $\overline{\{x_n: n \in \N\}}$
	becomes a copy of $\beta\mathbb{N}$. 
\end{proof}

\begin{cor}
	\label{CorobetaNSick}
	If $K$ is a sick compactum, then every nonmetrizable closed subspace contains a copy of $\betaN$.
\end{cor}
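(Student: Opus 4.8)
The plan is to deduce the corollary from Theorem~\ref{TheoDecompositionSickMetrizbleBetaN}. Fix a nonmetrizable closed subspace $L\subseteq K$ and let $(K_n)_n$ be the increasing sequence of metrizable closed subspaces of $K$ provided by that theorem. The goal is to produce a discrete sequence of points of $L$ that escapes the $K_n$ in the precise sense required by the theorem, and then let $L$ inherit the resulting copy of $\betaN$.

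The first step is to observe that $L\setminus K_n$ is infinite for every $n$. Indeed, $L\cap K_n$ is a closed subspace of the compact metrizable space $K_n$, hence compact and metrizable; if $L\setminus K_n$ were finite, it would be a finite relatively open subset of the $T_1$ space $L$, so all its points would be isolated in $L$, and $L$ would be the topological sum of the compact metrizable space $L\cap K_n$ with finitely many isolated points. Such a space has countable weight, so by Urysohn's metrization theorem it would be metrizable, contradicting the choice of $L$.

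The second step uses the classical fact that every infinite subset of a Hausdorff space contains an infinite discrete subspace. Using the first step, choose pairwise distinct points $x_n\in L\setminus K_n$ and apply this fact to $\{x_n:n\in\mathbb N\}\subseteq L$ to obtain indices $n_1<n_2<\cdots$ such that $\{x_{n_k}:k\in\mathbb N\}$ is discrete. Put $y_k:=x_{n_k}$; since $n_k\ge k$ and the $K_n$ are increasing, $y_k\notin K_{n_k}\supseteq K_k$, so $(y_k)_k$ is a discrete sequence of points of $K$ with $y_k\notin K_k$ for every $k$. Theorem~\ref{TheoDecompositionSickMetrizbleBetaN} then yields that $\overline{\{y_k:k\in\mathbb N\}}$ is homeomorphic to $\betaN$, and this closure is contained in $L$ because $L$ is closed in $K$. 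Hence $L$ contains a copy of $\betaN$, as desired.

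The only genuinely nonroutine point is the extraction in the second step: a priori nothing prevents the $x_n$ from accumulating in an uncontrolled way, and one must refine them to an honestly discrete subfamily while keeping the indices under control. This is handled by the quoted topological fact, whose short proof splits according to whether $\{x_n:n\in\mathbb N\}$ is closed in $L$: if it is, it is a countably infinite compact Hausdorff space, hence scattered (a nonempty perfect compact Hausdorff space has cardinality at least $2^{\aleph_0}$), so its infinitely many isolated points form a discrete subspace; if it is not, one builds a discrete sequence around an accumulation point using the regularity of the compact Hausdorff space $L$. Everything else is bookkeeping with the monotonicity of $(K_n)$ and with the fact that closures taken in $K$ of subsets of $L$ remain inside $L$.
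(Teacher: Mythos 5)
Your argument is correct and follows essentially the same route as the paper: the same dichotomy (either $L\setminus K_n$ is finite for some $n$, forcing $L$ to be metrizable, or one extracts a discrete sequence $x_n\in L\setminus K_n$ and applies Theorem~\ref{TheoDecompositionSickMetrizbleBetaN}, the closure staying inside the closed set $L$). The only difference is that you spell out the extraction of the discrete subsequence and the metrizability of $L$ in the finite case, details the paper's proof leaves implicit.
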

\begin{proof}
	By Theorem \ref{TheoDecompositionSickMetrizbleBetaN}, we can write $K=L \cup \bigcup_n K_n$, where $K_n$ is an increasing sequence of metrizable closed subspaces of $K$ and such that if $\{x_n\}_n$ is a discrete sequence of points with $x_n \notin K_n$ for every $n\in \N$ then $\overline{\{x_n: n \in \N\}}$ is homeomorphic to $\beta\mathbb{N}$.
	Thus, if $S\subseteq K$ is an infinite closed subspace of $K$, then either $S \setminus K_n$ is finite for some $n\in \N$ and therefore $S$ is metrizable, or else $S$ contains a discrete sequence of points  with $x_n \notin K_n$ for every $n\in \N$ as desired.
\end{proof}

As another application, we show that products of sick compacta are not sick in general.

\begin{cor}\label{corproductos}
	If $K\times L$ is sick then either $K$ or $L$ is a metrizable compactum.
\end{cor}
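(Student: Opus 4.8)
\emph{Proof proposal.} The plan is to argue by contradiction: assume $K\times L$ is sick while neither $K$ nor $L$ is metrizable, and produce a discrete sequence violating Theorem~\ref{TheoDecompositionSickMetrizbleBetaN}. First, for any $\ell_{0}\in L$ the slice $K\times\{\ell_{0}\}$ is a nonmetrizable closed subspace of $K\times L$, so by Corollary~\ref{CorobetaNSick} it, and hence $K$, contains a copy of $\beta\mathbb{N}$; taking the integers of such a copy we obtain a relatively discrete sequence $(a_{i})_{i}$ in $K$ with $M:=\overline{\{a_{i}\}}\cong\beta\mathbb{N}$. Symmetrically we get $(b_{j})_{j}$ in $L$ with $N:=\overline{\{b_{j}\}}\cong\beta\mathbb{N}$, so that $\Pi:=M\times N$ is a closed subspace of $K\times L$ homeomorphic to $\beta\mathbb{N}\times\beta\mathbb{N}$. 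Now fix the increasing sequence $(K_{n})$ of metrizable closed subsets of $K\times L$ provided by Theorem~\ref{TheoDecompositionSickMetrizbleBetaN}. Since $\beta\mathbb{N}\times\beta\mathbb{N}$ has no nontrivial convergent sequences, its compact metrizable subspaces are finite, so each $K_{n}\cap\Pi$ is finite and $Q:=\bigcup_{n}(K_{n}\cap\Pi)$ is a countable subset of $\Pi$.

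The core of the proof is to build inside $\Pi$ a relatively discrete sequence $(z_{n})$ which is disjoint from $Q$ (so that $z_{n}\notin K_{n}$ for every $n$) and whose closure is homeomorphic to $\beta\mathbb{N}\times\beta\mathbb{N}$, not to $\beta\mathbb{N}$. The naive candidate --- the isolated points $\{(a_{i},b_{j})\}$ of $\Pi$, whose closure is all of $\Pi$ --- is useless here, since this set could be entirely contained in $Q$; one must instead exploit the ``remainder'' of $\Pi$, which has $2^{\mathfrak{c}}$ points against the countably many of $Q$. Concretely, I would fix a partition $\mathbb{N}=\bigsqcup_{j}B_{j}$ into infinite sets, consider the corresponding pairwise disjoint clopen sets $\widehat{B_{j}}\subseteq N$, and pick $q_{j}$ in the uncountable remainder $\widehat{B_{j}}\setminus\{b_{k}:k\in B_{j}\}$ so that $(a_{i},q_{j})\notin Q$ for all $i$ --- possible because $Q$ is countable. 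Put $D':=\{(a_{i},q_{j}):i,j\in\mathbb{N}\}$. Each of its points is isolated in $D'$ via the clopen box $\{a_{i}\}\times\widehat{B_{j}}$, so $D'$ is relatively discrete in $\Pi$, hence in $K\times L$, and it avoids $Q$ by construction. For the closure: the points $q_{j}$ are relatively discrete in $N\cong\beta\mathbb{N}$ and, being separated by disjoint clopen subsets of $N$, any two complementary subfamilies have disjoint closures; thus $\{q_{j}\}$ is $C^{*}$-embedded in its closure and $\overline{\{q_{j}\}}\cong\beta\mathbb{N}$ (maximality of the Stone--\v{C}ech compactification). Therefore $\overline{D'}=M\times\overline{\{q_{j}\}}\cong\beta\mathbb{N}\times\beta\mathbb{N}$.

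Enumerating $D'=\{z_{n}:n\in\mathbb{N}\}$ then gives a discrete sequence in $K\times L$ with $z_{n}\notin K_{n}$ for all $n$, so Theorem~\ref{TheoDecompositionSickMetrizbleBetaN} would force $\overline{\{z_{n}\}}=\overline{D'}$ to be a copy of $\beta\mathbb{N}$ --- impossible, since $\beta\mathbb{N}\times\beta\mathbb{N}$ is not homeomorphic to $\beta\mathbb{N}$ (for instance, the former is not extremally disconnected). This contradiction shows that $K$ or $L$ must be metrizable. I expect the main difficulty to be precisely the balancing act in the middle step: dodging all the sets $K_{n}$ forces one to work in the remainder of $\Pi$, where the easily available relatively discrete sets typically have closure $\beta\mathbb{N}$, so one has to choose the remainder points carefully enough --- relatively discrete, with pairwise separated closures --- that $\overline{D'}$ becomes a genuine square $\beta\mathbb{N}\times\beta\mathbb{N}$ and not merely $\beta\mathbb{N}$.
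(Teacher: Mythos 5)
Your proof is correct, but it takes a genuinely different route from the one in the paper. The paper applies Theorem~\ref{TheoDecompositionSickMetrizbleBetaN} to $K\times L$ directly: it projects the metrizable sets $Z_n$ onto the two factors, picks discrete sequences $x_n\in K\setminus K_n$ and $y_n\in L\setminus L_n$, and then enumerates the whole grid $\{(x_{\alpha(i)},y_{\alpha(j)})\}$ along diagonals using a fast function $\alpha$ (with $\alpha(n)>n^2$) so that the $m$-th point of the enumeration automatically misses $Z_m$; the closure of this discrete set is $\overline{\{x_{\alpha(n)}\}}\times\overline{\{y_{\alpha(n)}\}}$, a product of two infinite compacta, which contradicts the theorem because $\betaN$ is not homeomorphic to such a product. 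You instead first use Corollary~\ref{CorobetaNSick} to place a closed copy $\Pi\cong\betaN\times\betaN$ inside $K\times L$, observe that each trace $K_n\cap\Pi$ is finite (since $\Pi$ has no nontrivial convergent sequences), so that only a countable set $Q$ must be dodged, and then build the grid $\{(a_i,q_j)\}$ with the $q_j$ chosen in the remainders of the clopen pieces $\widehat{B_j}$ of a partition; your key step $\overline{\{q_j\}}\cong\betaN$ is fine, but note that ``separated by pairwise disjoint clopen sets'' does not by itself give disjoint closures of complementary subfamilies in a general compactum --- here it works because $N\cong\betaN$ is extremally disconnected, or more concretely because $\{q_j:j\in S\}$ sits inside the clopen set $\widehat{\bigcup_{j\in S}B_j}$, which is disjoint from its counterpart for the complement; this deserves to be said explicitly. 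As for what each approach buys: the paper's argument is shorter and self-contained, its only delicate point being the diagonal enumeration that dodges the increasing sets $Z_n$, and it needs only the classical fact that $\betaN$ is not a product of two infinite compact spaces; your argument invokes more $\betaN$ machinery (Corollary~\ref{CorobetaNSick}, the size of remainders, the $C^*$-embedding criterion, and the failure of extremal disconnectedness of $\betaN\times\betaN$), but in exchange it eliminates all indexing bookkeeping --- inside $\Pi$ the forbidden set is countable, so any injective enumeration of the grid works --- and it identifies the closure exactly as $\betaN\times\betaN$ rather than merely as a nontrivial product.
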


\begin{proof}
	Consider the metrizable closed subspaces $Z_n$ that Theorem~\ref{TheoDecompositionSickMetrizbleBetaN} gives for $K\times L$. Let $K_n$ and $L_n$ be the projections of $Z_n$ onto the first and second coordinates respectively. If neither $K$ nor $L$ are metrizable, then $K\neq K_n$ and $L\neq L_n$ for all $n$, so we can find discrete sequences $\{x_n\}$ and $\{y_n\}$ with $x_n\in K\setminus K_n$ and $y_n\in L\setminus L_n$. Consider any function $\alpha:\mathbb{N}\To \mathbb{N}$ with $\alpha (n)>n^2$ for all $n$, and look at the following sequence of points in $K\times L$:
	For $n\in\mathbb N$ and $1\leq k\leq 2n-1$, let 
	$$z_{(n-1)^2+k}=\left\{
	\begin{array}{ll}
	(x_{\alpha(k)},y_{\alpha(n)}) &   \text{ for } 1\leq k\leq n,  \\
	& \\
	(x_{\alpha(n)},y_{\alpha(2n-k)}) &   \text{ for } n< k\leq 2n-1.
	\end{array}
	\right.$$
	
	This is a discrete sequence of points with $z_n\not\in Z_n$, so its closure should be homeomorphic to $\betaN$. But this closure equals $\overline{\{x_{\alpha (n)}\}} \times \overline{\{y_{\alpha (n)}\}} $ while $\betaN$ is not the product of two infinite compact spaces.
\end{proof}

We conjecture that if the product of two infinite compacta $K\times L$ is sick, then \emph{both} $K$ and $L$ must be metrizable. We shall give some partial results supporting this in Section~\ref{SectionSuprema}.

\section{Algebras of sets associated to sick compacta}\label{SectionSuprema}

So far, we know that sick compacta support strictly positive measures, are of analytic complexity in the appropriate sense and they are full of copies of $\beta\mathbb{N}$ wherever they are not metrizable. But, as we shall see, even having all those features is not enough to be sick. In order to get a deeper insight into the class, we are going to focus now on the following class.

\begin{defn}
	An algebra $\mathcal{A}\subset \mathcal{P}(\mathbb{N})$ will be called an \textit{s-sick algebra} if it contains all finite sets and is isomorphic to the algebra of clopen sets of a sick compactum.
\end{defn}

One motivation to look at this class is that it characterizes the families of subsequences of a disjoint sequence with a supremum, whose suprema exist in a separable Banach lattice.

\begin{thm}\label{supremaofsequences}
	For $\mathcal{A}\subseteq\mathcal{P}(\mathbb{N})$ the following are equivalent:
	\begin{enumerate}
		\item $\mathcal{A}$ is an s-sick algebra.
		\item There exists a separable Banach lattice $Y$ and a sequence of pairwise disjoint positive elements $\{e_n\}_n\subset Y$ with a supremum such that
		$$\mathcal{A} = \left\{A\subseteq\mathbb{N} : \exists \sup_{n\in A}e_n \in Y\right\}$$
		\item There exists a separable Banach lattice $Y$ with  atoms $\{e_n\}_n\subset Y$ that have a supremum and such that
		$$\mathcal{A} = \left\{A\subseteq\mathbb{N} : \exists \sup_{n\in A}e_n \in Y\right\}$$
	\end{enumerate}
\end{thm}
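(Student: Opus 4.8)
The plan is to prove the cycle of implications $(1)\Rightarrow(3)\Rightarrow(2)\Rightarrow(1)$, with the bulk of the work in $(1)\Rightarrow(3)$ and $(2)\Rightarrow(1)$. The implication $(3)\Rightarrow(2)$ is immediate, since atoms are in particular nonzero pairwise disjoint positive vectors, so the same family $\mathcal{A}$ is produced.

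For $(1)\Rightarrow(3)$, let $K$ be a sick compactum whose clopen algebra is (isomorphic to) $\mathcal{A}$, and fix $T\colon C(K)\To E_x\subseteq E$ a vector lattice isomorphism onto a principal ideal of a separable Banach lattice. Since $\mathcal{A}$ contains all finite sets, $K$ has a countable dense set of isolated points $\{t_n\}_n$, and I would let $e_n\in C(K)$ be the indicator $\chi_{\{t_n\}}$; these are atoms of $C(K)$ with $\sup_n e_n=\mathbf{1}$ (in $C(K)$, since $\mathbf 1$ is the least common upper bound of the characteristic functions of the isolated points whose union is dense), hence $Te_n$ are atoms of the sublattice $E_x$ and, because $E_x$ is an ideal, atoms of $E$ itself, with $\sup_n Te_n=Tx$ in $E$. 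Now I must check the key equality: for $A\subseteq\mathbb{N}$, the supremum $\sup_{n\in A}Te_n$ exists in $E$ if and only if $A\in\mathcal{A}$, i.e. $\overline{\{t_n:n\in A\}}$ is clopen. If $A$ is clopen then $\chi_{\overline{\{t_n:n\in A\}}}\in C(K)$ is the supremum of $\{e_n:n\in A\}$ in $C(K)$ and $T$ of it is the supremum in $E_x$; since $E_x$ is an ideal and $T$ of this element is dominated by $Tx$, it is also the supremum in $E$. Conversely, if $\sup_{n\in A}Te_n=y$ exists in $E$, then $y\leq Tx$ forces $y\in E_x$, so $y=Tg$ for some $g\in C(K)$; one checks $g$ must be the characteristic function of $\overline{\{t_n:n\in A\}}$ (it is $\{0,1\}$-valued by minimality and equals $1$ on a dense subset of that closure, $0$ off it), which therefore is clopen, i.e. $A\in\mathcal{A}$. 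Taking $Y=E$ finishes this implication.

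For $(2)\Rightarrow(1)$, start from $Y$ and $\{e_n\}_n$ as in $(2)$ with $u:=\sup_n e_n$. Passing to the principal ideal $Y_u=C(K)$ for $K:=K_u(Y)$, which is again a separable Banach lattice since it is an ideal of a separable space, I get that $K$ is sick; it remains to identify its clopen algebra with $\mathcal{A}$. Under the Kakutani identification $Y_u\cong C(K)$ sending $u$ to $\mathbf 1$, the disjoint vectors $e_n$ become disjoint positive elements of $C(K)$ below $\mathbf 1$. The point is that $\sup_{n\in A}e_n$ exists in $Y$ exactly when it exists in $Y_u$ (the ``if'' is trivial; for ``only if'' note any supremum in $Y$ of vectors $\leq u$ is itself $\leq u$, hence in $Y_u$), and this supremum in $C(K)$, when it exists, is forced to be a characteristic function of a clopen set; conversely every clopen subset of $K$ arises this way because the isolated points of $K$ — equivalently the atoms of $C(K)$ that one can extract from the $e_n$ after normalizing — are dense. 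A mild technical point to address here is that the $e_n$ need not themselves be characteristic functions of isolated points, only disjoint positive functions; I would replace each by a normalized version and argue that a discrete set of points (one in the support of each $e_n$) is dense in $K$ and that suprema of the $e_n$ over $A$ track closures of the corresponding point sets, so that the correspondence $A\mapsto\overline{\{\text{selected points}:n\in A\}}$ is a Boolean isomorphism of $\mathcal{A}$ onto $\Clop(K)$. Since $\mathcal{A}$ contains all finite sets, those points are isolated and the compactum is genuinely the Stone-type space we want.

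The main obstacle is the careful bookkeeping in $(2)\Rightarrow(1)$: extracting honest isolated points and a genuine Boolean isomorphism from an abstract disjoint sequence $\{e_n\}$ that is not assumed to consist of indicators, and verifying that existence of suprema is insensitive to the passage between $Y$, the ideal $Y_u$, and $C(K)$. The rest — the atom/ideal manipulations in $(1)\Rightarrow(3)$ and the triviality of $(3)\Rightarrow(2)$ — is routine once the right representation is fixed.
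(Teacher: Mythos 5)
Your implication $(1)\Rightarrow(3)$ breaks at its first step: you assert that, since $\mathcal{A}$ contains all finite sets, the witnessing sick compactum $K$ has a countable dense set of isolated points $t_n$ corresponding to the atoms of $\mathcal{A}$. But the definition of s-sick only provides an abstract Boolean isomorphism of $\mathcal{A}$ with $\Clop(K)$; the atoms of $\Clop(K)$ are minimal clopen sets, not singletons. For instance, $K=[0,1]\times(\{1/n:n\in\mathbb{N}\}\cup\{0\})$ is metrizable, hence sick, and $\Clop(K)$ is the finite--cofinite algebra on $\mathbb{N}$ (an s-sick algebra containing all finite sets), yet $K$ has no isolated points at all: the clopen atoms are the slices $[0,1]\times\{1/n\}$, their images $T\chi_{C_n}$ are not atoms of $E$, and condition (3) cannot be read off this witness directly. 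Producing atoms is exactly the nontrivial content here; the paper manufactures them (in its step $2\Rightarrow3$) by passing to the closed sublattice $Y'$ generated by $\{\sup_{i\in A}e_i:A\in\mathcal{A}\}$ and proving, via a disjointification and approximation argument, that the $e_n$ become atoms of $Y'$. Your proposal has no substitute for this step. (A further warning: even the existence of $\sup_n T\chi_{C_n}$ in $E$ can fail for the given witness --- glue a convergent sequence to an endpoint of an interval; the indicators of the isolated points then have no supremum in $C(K)$ --- so some reduction of the lattice is genuinely unavoidable.)

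Your $(2)\Rightarrow(1)$ has the same gap in dual form: you try to identify $\Clop(K_u(Y))$, with $u=\sup_n e_n$, with $\mathcal{A}$ via ``selected points'' in the supports of the $e_n$, and this fails when the $e_n$ are not atoms. Take $Y=C(K_0)$ with $K_0=(\{1/n\}\cup\{0\})^2$ and $e_n$ the indicator of the column $\{1/n\}\times(\{1/m\}\cup\{0\})$: then $\sup_{n\in A}e_n$ exists exactly when $A$ is finite or cofinite, so $\mathcal{A}$ is the finite--cofinite algebra, while $u=\mathbf{1}$, $K_u(Y)=K_0$, and $\Clop(K_0)$ is not isomorphic to $\mathcal{A}$ (Stone duality: $K_0$ is not homeomorphic to a single convergent sequence). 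In this example your intermediate claims also fail: the selected points are neither isolated nor dense, $A\mapsto\overline{\{\text{selected points}:n\in A\}}$ does not land in $\Clop(K_u(Y))$, and for non-indicator disjoint $e_n$ an existing supremum need not be a characteristic function. The paper's route $2\Rightarrow3\Rightarrow1$ avoids all of this: it first shrinks to the sublattice $Y'$ in which the $e_n$ are atoms, and only then forms the structure space $K_e(Y')$ --- a different compactum from $K_u(Y)$ --- where $\sup_{n\in A}1_{\{p_n\}}$ exists precisely when $\overline{\{p_n:n\in A\}}$ is clopen. Your $(3)\Rightarrow(2)$ is fine, and your handling of suprema of indicators of isolated points matches the paper's $3\Rightarrow1$, but as written the cycle of implications is not closed.
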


\begin{proof}
	$[1\Rightarrow 2]$ Let $K$ be a sick compactum together with a Boolean isomorphism $\phi:\mathcal{A}\To {\rm clopen}(K) = C(K,\{0,1\})$, and let $T:C(K)\To Y$ be a lattice homomorphism onto $Y_e$ that takes the constant one function to $e$. The elements $e_n = T\phi\{n\}$ satisfy the required conditions. 
	
	$[2\Rightarrow 3]$ Consider $Y'$ the closed sublattice of $Y$ generated by $\{\sup_{i\in A} e_i:A\in\mathcal A\}$. We claim that $(e_n)$ are atoms in $Y'$. Indeed, suppose $y\in Y'$ satisfies $0\leq y\leq e_n$ for some $n\in\mathbb N$. Note that $y$ is a limit of a sequence $(y_k)\subset Y'$ such that each $y_k$ belongs to the sublattice generated by finitely many elements of the form $(\sup_{i\in A_j^k} e_i)_{1\leq j\leq p_k}$ where $A^k_1,\ldots, A^k_{p_k}\in\mathcal A$. Without loss of generality we can assume $0\leq y_k\leq e_n$ for every $k\in\mathbb N$.
	
	For each $k\in\mathbb N$ we can find a finite collection of pairwise disjoint $B^k_1,\ldots,B^k_{q_k}\in\mathcal A$ so that the subalgebra of $\mathcal A$ generated by $\{A^k_1,\ldots, A^k_{p_k}\}$ coincides with the subalgebra generated by $\{B^k_1,\ldots,B^k_{q_k}\}$. Since the sequence $(\sup_{i\in B^k_j}e_i)_{j=1}^{q_k}$ are pairwise disjoint it follows that $y_k$ belongs to the linear span of $(\sup_{i\in B^k_j}e_i)_{j=1}^{q_k}$. In this situation, the inequality $0\leq y_k\leq e_n$ implies that $y_k=\lambda_k e_n$ for some $0\leq \lambda_k\leq 1$. Taking the limit in $k$ it follows that $y=\lambda e_n$ for some $0\leq\lambda\leq1$. Hence, $(e_n)_{n\in\mathbb N}$ are atoms in $Y'$ as claimed.

	$[3\Rightarrow 1]$ Let $e=\sup_n e_n$ and let $K$ be the sick compactum for which there is a vector lattice isomorphism $T:C(K)\To Y_e$ that takes the constant one function to $e$. Since $e_n$ is an atom of $Y$, it follows that $T^{-1}e_n$ is an atom of $C(K)$. Atoms of a space of continuous functions are positive multiples of characteristic functions of isolated points, so $T^{-1}e_n = \lambda_n 1_{\{p_n\}}$. Since $\sup_ n e_n = e$, we have that $\sup_n T^{-1}e_n$ is the constant one function. This implies that $T^{-1}e_n = 1_{\{p_n\}}$ for all $n$. Now consider the function $\Psi:{\rm clopen}(K)\To \mathcal{P}(\mathbb{N})$ given by $\Psi(B) = \{n : p_n\in B\}$. We claim that $\Psi$ is a Boolean isomorphism onto $\mathcal{A}$. It is clear that $\Psi$ is a Boolean homomorphism. It is one-to-one because the points $\{p_n\}$ are dense in $K$, as $\sup_n 1_{\{p_n\}}$ is the constant one function. The range of $\Psi$ is $\mathcal{A}$, because $f = \sup_{n\in A}1_{\{p_n\}}$ in $C(K)$ if and only if $f$ is the characteristic function of $\overline{\{p_n : n\in A\}}$ and this set is clopen. 	
\end{proof}

It should be noted that we do not require in the definition of s-sick algebra that the compact space $K$ is totally disconnected. The space $K$ that appears in the proof of $[3\Rightarrow 1]$ above may not be totally disconnected. It seems like an elementary question, but we do not know if the Stone space of an s-sick algebra is a sick compactum.\\

Our next observation is, roughly speaking, that the elements of an s-sick algebra $\mathcal{B}$ that are hereditarily in $\mathcal{B}$ form a substantial part of $\mathcal{B}$. We first introduce the following terminology. Given $\mathcal{B}$, let us define  
$$J(\mathcal{B}) = \{A\subset\mathbb{N} : \mathcal{P}(A)\subseteq\mathcal{B}\},$$ 
which is an ideal of $\mathcal{P}(\mathbb{N})$. For $J\subseteq\mathcal{P}(\mathbb{N})$, set 
$$J^\perp = \{A\subseteq \mathcal{P}(\mathbb{N}) : \forall B\in J\ A\cap B\text{ is finite}\}.$$
\medskip

\begin{prop}\label{orthogonalideal}
	If $\mathcal{B}\subseteq\mathcal{P}(\mathbb{N})$ is an s-sick algebra, then $J(\mathcal{B})^\perp$ is a countably generated ideal of $\mathcal{P}(\mathbb{N})$, and $J(\mathcal{B})^\perp\cap \mathcal{B}$ is countable.
\end{prop}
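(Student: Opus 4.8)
The plan is to work inside the sick compactum $K$ whose clopen algebra is (isomorphic to) $\mathcal{B}$, with the dense set of isolated points $\{p_n : n\in\mathbb{N}\}$ corresponding to the atoms of $\mathcal{B}$; identify $A\subseteq\mathbb{N}$ with the subset $\{p_n : n\in A\}$ of $K$. The key translation is: $A\in J(\mathcal{B})$ iff every subset of $A$ has clopen closure in $K$, which by the argument in Theorem~\ref{supremaofsequences} (the $[3\Rightarrow1]$ direction, using that all subsequences of the characteristic functions have a supremum) is equivalent to $\overline{\{p_n : n\in A\}}$ being a copy of $\beta\mathbb{N}$ (or finite). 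Now bring in Theorem~\ref{TheoDecompositionSickMetrizbleBetaN}: fix the increasing sequence $K_n$ of metrizable closed subspaces it provides. I claim that $A\in J(\mathcal{B})$ whenever $A\setminus\{n : p_n\in K_n\}$ is infinite, and conversely that $A$ being almost contained in $\{n : p_n\in K_n\}$ forces $A$ to meet the "small" part — more precisely, I will show $J(\mathcal{B})^\perp$ is generated by the single set $M=\{n : p_n\in \bigcup_k K_k\}$ together with a countable family coming from the metrizability of each $K_n$.

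The main step is to pin down $J(\mathcal{B})^\perp$. First I would check that if $B\in J(\mathcal{B})$ then $B\cap\{n : p_n\in K_n\}$ is finite: if it were infinite, it would be a discrete sequence $x_k=p_{n_k}$ with $x_k\in K_{n_k}$, yet as a subset of $B$ its closure is a copy of $\beta\mathbb{N}$; but $\beta\mathbb{N}$ has no nontrivial convergent sequences and an infinite subset of $\bigcup K_n$ meeting infinitely many levels... here I need to be slightly careful — the cleaner route is: inside $\overline{\{p_{n_k}\}}\cong\beta\mathbb{N}$ no point is $G_\delta$, but $K_{n_k}$ is metrizable, so any point of $\overline{\{p_{n_k}\}}$ lying in some $K_j$ would be $G_\delta$ in $K_j$ hence (since $K_j$ is closed) have a $G_\delta$-in-$K$ trace, contradicting that the whole closure sits in $\beta\mathbb{N}$ — so in fact $B$ can contain at most finitely many indices with $p_n$ in any single metrizable $K_j$ as well. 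Thus every $B\in J(\mathcal{B})$ has finite intersection with each $M_j:=\{n : p_n\in K_j\}$, so each $M_j\in J(\mathcal{B})^\perp$. Conversely, if $A\notin\bigcup_j\{$finite modifications of subsets of some $M_j\}$, I must produce $B\in J(\mathcal{B})$ with $A\cap B$ infinite: pick $n_k\in A$ with $p_{n_k}\notin K_k$ (possible since $A$ is not almost inside any $M_j$, in particular $A\setminus M_k$ is infinite for every $k$, so one can diagonalize to get a discrete such sequence), and then $B=\{n_k\}$ satisfies $\overline{\{p_{n_k}\}}\cong\beta\mathbb{N}$ by Theorem~\ref{TheoDecompositionSickMetrizbleBetaN}, hence $\mathcal{P}(B)\subseteq\mathcal{B}$, i.e. $B\in J(\mathcal{B})$, and $A\cap B=B$ is infinite. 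This shows $J(\mathcal{B})^\perp=\{A : A\subseteq^* M_j$ for some $j\}$ up to finite sets, which is countably generated by $\{M_j : j\in\mathbb{N}\}$.

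For the second assertion, $J(\mathcal{B})^\perp\cap\mathcal{B}$: an element $A$ of this intersection is a clopen subset of $K$ that is almost contained in some $M_j$, i.e. $\overline{A}\subseteq K_j\cup F$ for a finite set $F$; since $\overline{A}$ is clopen and $K_j$ is metrizable (hence second countable), the clopen subsets of $K$ contained in the metrizable closed set $K_j$ (up to finite sets) form a countable family — indeed the clopen subsets of $K$ whose closure lies in a fixed compact metrizable $K_j$ inject into the clopen algebra of $K_j$ plus finite adjustments, and a metrizable compactum has only countably many clopen sets. Taking the union over $j$ keeps it countable, and $J(\mathcal{B})^\perp\cap\mathcal{B}$ is contained in this union, hence countable.

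The step I expect to be the real obstacle is the first half of pinning down $J(\mathcal{B})^\perp$: showing that a set $B$ all of whose subsets have clopen closure cannot have infinitely many of its points spread across the metrizable levels $K_n$. The subtlety is that $B\cap M_j$ finite for each fixed $j$ does not by itself prevent $B$ from meeting $\bigcup_j M_j$ in an infinite set hitting a different $M_j$ each time; one must use that $\overline{B}\cong\beta\mathbb{N}$ contains no $G_\delta$ points while each $K_j$, being a closed metrizable subspace, contributes $G_\delta$-in-$K$ points — and verify that a point of $K_j$ that is $G_\delta$ in $K_j$ is in fact $G_\delta$ in $K$, which holds because $K_j$ is closed (a countable neighborhood base in $K_j$ combined with a single open set separating the point from $K\setminus(\text{a chosen closed nbhd})$ does the job, or more directly: a closed subspace that is $G_\delta$-in-itself at a point, in a compact space, is $G_\delta$ there). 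Once that topological lemma is in place, the rest is bookkeeping with Theorem~\ref{TheoDecompositionSickMetrizbleBetaN}.
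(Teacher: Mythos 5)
Your plan is a genuinely different, topological route (through Theorem~\ref{TheoDecompositionSickMetrizbleBetaN}) rather than the paper's direct lattice argument, but as written it has a real gap at its central step. You treat ``$\overline{\{p_n:n\in A\}}$ is finite or a copy of $\beta\mathbb{N}$'' as \emph{equivalent} to $A\in J(\mathcal{B})$, and in the converse half of your characterization of $J(\mathcal{B})^\perp$ you conclude $B=\{n_k\}\in J(\mathcal{B})$ solely from the statement of Theorem~\ref{TheoDecompositionSickMetrizbleBetaN}, which gives $\overline{\{p_{n_k}\}}\cong\beta\mathbb{N}$. Only the forward implication of your equivalence is valid. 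Membership of $D\subseteq B$ in $\mathcal{B}$ means, via Theorem~\ref{supremaofsequences}, that $\sup_{n\in D}1_{\{p_n\}}$ exists in $C(K)$, i.e.\ that $\overline{\{p_n:n\in D\}}$ is clopen \emph{in $K$}; knowing that $\overline{B}$ is internally homeomorphic to $\beta\mathbb{N}$ only makes these closures relatively clopen in $\overline{B}$ and says nothing about their being open in $K$ (a copy of $\beta\mathbb{N}$ whose remainder meets the closure of other isolated points illustrates the problem). To close the gap you must go inside the proof of that theorem rather than quote its statement: $p_{n_k}\notin K_k$ means $N(p_{n_k})=\|x_{n_k}\|<2^{-k}$ (for an isolated point $N(p_n)$ is just the norm of the atom $x_n=T1_{\{p_n\}}$), and then the summability of the norms of the pairwise disjoint atoms gives every $\sup_{k\in D}x_{n_k}$ as the sum of a subseries, dominated by $x$ and hence in the ideal. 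At that point you are essentially reproducing the paper's proof, which characterizes $J(\mathcal{B})^\perp$ directly as $\{A:\inf_{n\in A}\|x_n\|>0\}$; your generators $M_j=\{n:p_n\in K_j\}$ are, in disguise, the paper's $A_m=\{n:\|x_n\|>1/m\}$.

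A second, smaller flaw: the auxiliary lemma you lean on, that a point which is $G_\delta$ in the closed subspace $K_j$ is $G_\delta$ in $K$, is false in general (in the one-point compactification of an uncountable discrete set, the point at infinity is $G_\delta$ in the closure of any countable subset but not in the whole space). Fortunately you do not need it: if $B\in J(\mathcal{B})$ had infinite intersection with $M_j$, then $\overline{\{p_n:n\in B\cap M_j\}}$ would be an infinite closed subset of the metrizable compactum $K_j$, hence metrizable, while the (valid) forward direction of your equivalence forces it to be a copy of $\beta\mathbb{N}$ --- or, even more cheaply, one can use the paper's argument that uncountably many suprema $\sup_{n\in D}x_n$, $D\subseteq B\cap M_j$, would be pairwise $\varepsilon$-separated in a separable lattice. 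Your countability argument for $J(\mathcal{B})^\perp\cap\mathcal{B}$ (countably many clopen sets inside each $K_j$, modulo finite sets of isolated points) is correct, but note that it rests on the inclusion $J(\mathcal{B})^\perp\subseteq\{A: A\subseteq^* M_j \text{ for some } j\}$, which is exactly the direction affected by the gap above; the paper instead gets countability by repeating the same separation--separability argument.
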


\begin{proof}
	Let $\{x_n\}$ be the associated sequence of atoms in a separable Banach lattice given by Theorem \ref{supremaofsequences}. We claim that 
	$$J(\mathcal{B})^\perp = \{A\subseteq\mathbb{N} : \inf_{n\in A}\|x_n\| > 0\},$$ 
	which is countably generated by the sets $A_m = \{n\in \mathbb{N} : \|x_n\|>1/m\}$. 
	
	For the inclusion $[\subseteq]$ suppose that $\inf_{n\in A}\|x_n\|=0$. Then there exists an infinite set $A'\subset A$ such that $\sum_{n\in A'}\|x_n\|<+\infty$. But this means that $A'\in J(\mathcal{B})$, and therefore $A\not\in J(\mathcal{B})^\perp$. 
	
	For the inclusion $[\supseteq]$, we must prove that if $\inf_{n\in A}\|x_n\|=\varepsilon>0$ and $B\in J(\mathcal{B})$ then $C = A\cap B$ is finite. If not, $\mathcal{P}(C)\subseteq \mathcal{B}$ and for all $D\subseteq C$ there exists $x_D = \sup_{n\in D}x_n$. If $n\in D\setminus D'$, then $x_D - x_{D'} \geq x_n$. Therefore for any different $D,D'\in \mathcal{P}(C)$ we have $\|x_D-x_{D'}\|\geq \varepsilon$. Since there are uncountably many subsets of $C$, this contradicts that all those vectors belong to a separable $E$. 
	
	The same argument proves the last statement.
\end{proof}

As an application, we provide some partial results, that add to Corollary~\ref{corproductos}, supporting the conjecture that a product of two infinite compact spaces is sick only when both factors are metrizable. 

\begin{prop}\label{propproductcountablyclopens}
	If $K\times L$ is sick and has a dense countable set of isolated points, then 
	both $K$ and $L$ have countably many clopen sets.
\end{prop}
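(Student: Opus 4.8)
The plan is to reduce the statement to a question about a single factor and about algebras of subsets of $\mathbb N$, and then to feed it into Proposition~\ref{orthogonalideal}. First, by Corollary~\ref{corproductos} one of the factors, say $L$, is metrizable; being compact and metrizable it is second countable, and every clopen subset of a compact space is a finite union of members of a fixed countable basis, so $L$ has only countably many clopen sets. Hence it suffices to prove that $K$ has countably many clopen sets. Write $M=K\times L$. A point of a product is isolated iff both of its coordinates are, so if $D$ is the dense countable set of isolated points of $M$ then its coordinate projections $P\subseteq K$ and $Q\subseteq L$ are countable dense sets of isolated points of $K$ and $L$, and $P\times Q$ is a dense set of isolated points of $M$; I will identify $P$ and $Q$ with $\mathbb N$.

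Next I would set up the ``matrix of atoms''. Fix a vector lattice isomorphism $T\colon C(M)\to E_x\subseteq E$ onto a principal ideal of a separable Banach lattice, with $T\mathbf 1=x$, and put $x_{n,m}=T(1_{\{(p_n,q_m)\}})$, $u_n=T(1_{\{p_n\}\times L})$, $v_m=T(1_{K\times\{q_m\}})$. Since the supremum of a subset of the ideal $E_x$ bounded by $x$ is computed the same in $E_x$ and in $E$, the sequences $(u_n)$ and $(v_m)$ are disjoint, positive, with $\sup_n u_n=\sup_m v_m=x$, and $u_n=\sup_m x_{n,m}$, $v_m=\sup_n x_{n,m}$, $u_n\wedge v_m=x_{n,m}$. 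Because $\pi_K$ is open and closed, $\overline{\bigcup_{n\in A}\{p_n\}\times L}=\overline{\{p_n:n\in A\}}^{K}\times L$, which is clopen in $M$ exactly when $\overline{\{p_n:n\in A\}}^{K}$ is clopen in $K$; so, using density of $P$ and $Q$, the families
$$\mathcal A_K=\{A\subseteq\mathbb N:\ \textstyle\sup_{n\in A}u_n\text{ exists in }E\},\qquad \mathcal A_L=\{B\subseteq\mathbb N:\ \textstyle\sup_{m\in B}v_m\text{ exists in }E\}$$
are, via $A\mapsto\overline{\{p_n:n\in A\}}^{K}$ and $B\mapsto\overline{\{q_m:m\in B\}}^{L}$, isomorphic to the algebras of clopen sets of $K$ and of $L$. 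Both contain the finite sets and $\mathbb N$, so by Theorem~\ref{supremaofsequences} both $\mathcal A_K$ and $\mathcal A_L$ are s-sick algebras, and Proposition~\ref{orthogonalideal} applies to each.

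I would then use Proposition~\ref{orthogonalideal} twice. On the $L$ side: $L$ metrizable contains no copy of $\beta\mathbb N$, so no infinite $B$ satisfies $\mathcal P(B)\subseteq\mathcal A_L$ (such a $B$ would make $\overline{\{q_m:m\in B\}}^L$ a compactification of $\mathbb N$ in which every subset has clopen closure, hence a copy of $\beta\mathbb N$ inside $L$); thus $J(\mathcal A_L)={\rm Fin}$, so $J(\mathcal A_L)^\perp=\mathcal P(\mathbb N)$, and since (as in the proof of Proposition~\ref{orthogonalideal}) $J(\mathcal A_L)^\perp=\{B:\inf_{m\in B}\|v_m\|>0\}$, we get $\inf_m\|v_m\|>0$. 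On the $K$ side: Proposition~\ref{orthogonalideal} gives that $\mathcal A_K\cap J(\mathcal A_K)^\perp$ is countable and $J(\mathcal A_K)^\perp=\{A:\inf_{n\in A}\|u_n\|>0\}$. Consequently, it is enough to show that $J(\mathcal A_K)={\rm Fin}$, for then $J(\mathcal A_K)^\perp=\mathcal P(\mathbb N)$ and $\mathcal A_K=\mathcal A_K\cap J(\mathcal A_K)^\perp$ is countable, which is the assertion about $K$.

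The final and hardest step is to prove that $K$ carries no copy of $\beta\mathbb N$ on a set of its isolated points, i.e.\ that no infinite $A_0$ has $\mathcal P(A_0)\subseteq\mathcal A_K$. Suppose one did. Then $\overline{\{p_n:n\in A_0\}}^{K}\times L$ is a clopen copy of $\beta\mathbb N\times L$ inside $M$, hence itself sick (restrict $T$ to the band it determines, realized by the ideal generated by $w_0=\sup_{n\in A_0}u_n$). Running the $K$-side analysis inside this sub-product, where the corresponding algebra $\mathcal A_K$ equals $\mathcal P(\mathbb N)$, forces $\|u_n\|\to 0$ along $A_0$ (now $\mathbb N\in J(\mathcal A_K)$, so $J(\mathcal A_K)^\perp={\rm Fin}$, i.e.\ every infinite index set has $\inf$ of the $\|u_n\|$ equal to $0$). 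One may then, after thinning, write an infinite subset of $A_0$ as a disjoint union $\bigsqcup_k A_k$ with each $A_k$ infinite and $\sum_{n\in A_k}\|u_n\|<2^{-k}$, all $A_k\in J(\mathcal A_K)$, and consider the restricted columns $v^{(k)}_m=\sum_{n\in A_k}x_{n,m}$: these are absolutely convergent (as $\|x_{n,m}\|\le\|u_n\|$), disjoint over $m$, admit a supremum, realize the same algebra $\mathcal A_L$, and satisfy $\|v^{(k)}_m\|<2^{-k}$ uniformly in $m$; yet by Proposition~\ref{orthogonalideal} applied to each sequence $(v^{(k)}_m)_m$ (using $J(\mathcal A_L)={\rm Fin}$ again) we must have $\inf_m\|v^{(k)}_m\|>0$. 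The expected obstacle is precisely to convert this tension --- uniformly small restricted columns that are nevertheless uniformly bounded below, sitting inside the separable lattice $E$ and interacting both with the fixed lower bound $\inf_m\|v_m\|>0$ and with one another through disjointness --- into an outright contradiction; this norm bookkeeping is the only point of the proof demanding real work, and is where I would concentrate the effort.
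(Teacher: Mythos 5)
Your opening moves are fine: using Corollary~\ref{corproductos} to dispose of one factor, and translating the clopen algebras of $K$ and $L$ into the algebras $\mathcal A_K$, $\mathcal A_L$ of index sets whose suprema $\sup_{n\in A}u_n$, $\sup_{m\in B}v_m$ exist, so that Theorem~\ref{supremaofsequences} and (the norm description of $J(\cdot)^\perp$ from the proof of) Proposition~\ref{orthogonalideal} apply. But the proof is not complete, and the gap is exactly where the content of the proposition lies. Your final step requires showing that no infinite $A_0$ satisfies $\mathcal P(A_0)\subseteq\mathcal A_K$, and the ``tension'' you exhibit there is not a contradiction: for each fixed $k$ the two facts you derive, namely $\|v^{(k)}_m\|<2^{-k}$ for all $m$ and $\inf_m\|v^{(k)}_m\|>0$, are perfectly compatible (the infimum can simply be a positive number below $2^{-k}$), and nothing in the configuration forces these infima to interact with the lower bound $\inf_m\|v_m\|>0$ or with each other, since the families $(v^{(k)}_m)_m$ for different $k$ are pairwise disjoint. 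In effect you have reduced the proposition to its hardest special case --- that $\beta\mathbb N\times L$ cannot be (clopen in) a sick compactum for $L$ an infinite metrizable compactum with dense isolated points, which is essentially the corollary about $\beta\mathbb N\times(\{1/n\}_n\cup\{0\})$ that the paper \emph{deduces from} this proposition --- and then stopped, explicitly leaving the contradiction to be found.

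The ingredient you never use, and which is the engine of the paper's proof, is the \emph{first} assertion of Proposition~\ref{orthogonalideal}: for an s-sick algebra $\mathcal B$ the ideal $J(\mathcal B)^\perp$ is countably generated. The paper applies this not to the factors but to the product algebra $\mathfrak A\otimes\mathfrak B\subseteq\mathcal P(\mathbb N\times\mathbb N)$ of clopens of $K\times L$ (s-sick by hypothesis), computes that $J(\mathfrak A\otimes\mathfrak B)^\perp$ consists of those $X$ all of whose vertical sections lie in $J(\mathfrak A)^\perp$ and all of whose horizontal sections lie in $J(\mathfrak B)^\perp$, and then runs a diagonal argument: if some $D\notin J(\mathfrak A)^\perp$ and $X_1,X_2,\dots$ generated the ideal, picking distinct $d_n\in D$ with $(d_n,n)\notin X_n$ produces a member $\{(d_n,n):n\in\mathbb N\}$ of the ideal contained in no generator. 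Hence $J(\mathfrak A)^\perp=J(\mathfrak B)^\perp=\mathcal P(\mathbb N)$, and the second assertion of Proposition~\ref{orthogonalideal} gives countability of both clopen algebras at once (no appeal to Corollary~\ref{corproductos} or to metrizability of a factor is needed). So the missing idea is this combinatorial use of countable generation over the product, not the norm bookkeeping you proposed; as it stands, your plan for the last step has no visible route to a contradiction.
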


\begin{proof}
	Let $fin\subset \mathfrak{A},\mathfrak{B}\subseteq \mathcal{P}(\mathbb{N})$ be the algebras of clopens of $K$ and $L$ respectively, and let $\mathfrak{A}\otimes\mathfrak{B}\subseteq \mathcal{P}(\mathbb{N}\times\mathbb{N})$ the product algebra, that represents the algebra of clopens of the product. Here recall that $\mathfrak{A}\otimes\mathfrak{B}$ is the algebra generated by the sets of the form $A\times B$ with $A\in \mathfrak{A}$ and $B\in \mathfrak{B}$. If $\mathfrak{A}\otimes\mathfrak{B}$ is s-sick, then by Proposition~\ref{orthogonalideal}, $J(\mathfrak{A}\otimes\mathfrak{B})^\perp$ is countably generated.  Note that $$J(\mathfrak{A}\otimes\mathfrak{B}) = \{(A\times F) \cup (G \times B) : A\in J(\mathfrak{A}),\ B\in J(\mathfrak{B}),\ F,G\text{ finite}\},$$
	$$J(\mathfrak{A}\otimes\mathfrak{B})^\perp= \{X : \forall n\ \{k: (k,n)\in X\}\in J(\mathfrak{A})^\perp, \{k : (n,k)\in X\}\in J(\mathfrak{B})^\perp\}.$$
	We claim that the only way that the latter ideal can be countably generated is that $J(\mathfrak{A})^\perp=J(\mathfrak{B})^\perp = \mathcal{P}(\mathbb{N})$. Indeed, assume for instance that there exists a set $D\not\in J(\mathfrak{A})^\perp$ and $X_1,X_2,\ldots$ are generators of $J(\mathfrak{A}\otimes\mathfrak{B})^\perp$. For every $n$ pick a different $d_n\in D$ such that $(d_n,n)\not\in X_n$. The set $\{(d_n,n) : n\in\mathbb{N}\}\in J(\mathfrak{A}\otimes\mathfrak{B})^\perp$ but it is not contained in any $X_n$, and this is a contradiction. By the last statement of Proposition~\ref{orthogonalideal} we conclude that $\mathfrak{A}$ and $\mathfrak{B}$ are countable.
\end{proof}

\begin{cor}
	$\beta\mathbb{N}\times (\{1/n\}_n\cup\{0\})$ is not sick.
\end{cor}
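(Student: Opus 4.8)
The plan is to obtain the corollary as an immediate consequence of Proposition~\ref{propproductcountablyclopens}. Write $K=\betaN$ and $L=\{1/n\}_n\cup\{0\}$. The first step is to verify the hypothesis of that proposition, namely that $K\times L$ has a dense countable set of isolated points. The isolated points of $\betaN$ are exactly the points of $\mathbb{N}$, and $\mathbb{N}$ is dense in $\betaN$; the isolated points of $L$ are the points $1/n$, and $\{1/n:n\in\mathbb{N}\}$ is dense in $L$ since $0$ lies in its closure. A point $(x,y)$ with $x$ isolated in $K$ and $y$ isolated in $L$ is isolated in $K\times L$, and using $\overline{A\times B}=\overline{A}\times\overline{B}$ the countable set $\mathbb{N}\times\{1/n:n\in\mathbb{N}\}$ is dense in $K\times L$. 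So the hypothesis holds.

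The second step is to recall the standard fact that the algebra of clopen subsets of $\betaN$ is isomorphic to $\mathcal{P}(\mathbb{N})$, hence has cardinality $\mathfrak{c}$ and in particular is uncountable. Combining the two steps: if $\betaN\times L$ were sick, then Proposition~\ref{propproductcountablyclopens} would imply that $\betaN$ has only countably many clopen sets, contradicting the previous sentence. Therefore $\betaN\times(\{1/n\}_n\cup\{0\})$ is not sick.

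There is no genuine obstacle here; the content is entirely carried by Proposition~\ref{propproductcountablyclopens}. It is worth remarking only that Corollary~\ref{corproductos} alone does not suffice for this example: there the conclusion is that \emph{one} of the factors is metrizable, and $L=\{1/n\}_n\cup\{0\}$ already is metrizable, so the easy dichotomy is not violated. It is precisely the refinement in Proposition~\ref{propproductcountablyclopens}---applicable because the product of two spaces with dense sets of isolated points again has a dense countable set of isolated points---that rules this product out.
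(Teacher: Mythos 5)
Your proof is correct and is exactly the argument the paper intends: the corollary is stated as an immediate consequence of Proposition~\ref{propproductcountablyclopens}, applied via the dense countable set of isolated points $\mathbb{N}\times\{1/n : n\in\mathbb{N}\}$ and the fact that $\beta\mathbb{N}$ has uncountably many clopen sets (its clopen algebra is $\mathcal{P}(\mathbb{N})$). Your closing remark that Corollary~\ref{corproductos} alone would not suffice, since $\{1/n\}_n\cup\{0\}$ is metrizable, is an accurate and worthwhile observation.
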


Every s-sick algebra $\mathcal{B}$ is analytic, and $J(\mathcal{B})$ is a $\mathbf{\Pi}^1_2$ set. We do not know if $J(\mathcal{B})$ must actually be an analytic ideal. What we know is that not all analytic ideals may appear as $J(\mathcal{B})$. One reason is that, if we think of the sequence $\{e_n\}$ in Theorem~\ref{supremaofsequences}, all subsequences whose norms are summable belong to $J(\mathcal{B})$ so, in the nontrivial cases, $J(\mathcal{B})$ must contain summable ideals. Here is a concrete example: 

\begin{example}
	Let $\mathcal{I}$ be the ideal of subsets of $\mathbb{Q}_1 = \mathbb{Q}\cap [0,1]$ with a finite number of accumulation points. Then $\mathcal{I}\neq J(\mathcal{B})$ for any s-sick algebra $\mathcal{B}$ of subsets of $\mathbb{Q}_1$. In particular, the algebra made of $\mathcal{I}$ and the complements of $\mathcal{I}$ is not s-sick, and its Stone space is not a sick compactum.
\end{example}

Assume that $\mathcal{I} = J(\mathcal{B})$ and that $\mathcal{B}$ represents the subsequences of $\{x_q : q\in\mathbb{Q}_1\}$ with a supremum as in Theorem~\ref{supremaofsequences}. Since $\mathcal{I}^\perp$ are just the finite sets, if follows from the proof of Proposition~\ref{orthogonalideal} that $\lim_q \|x_q\| = 0$. Enumerate all rational open intervals of $(0,1)$ as $J_1,J_2,\ldots$, and choose by induction $q_n\in J_n\cap \mathbb{Q}_1$, all different such that $\|x_{q_n}\|< 2^{-n}$. Then $\sum_{n} \|x_{q_{n}}\|<+\infty$ but $\{q_{n} : n\in\mathbb{N}\}\not\in J(\mathcal{B})$, a contradiction.

\section{Non-pathological analytic P-ideals}\label{SectionPideals}

Non-pathological analytic $P$-ideals were introduced by Farah \cite{Farah} as those ideals that are of the form $\text{Exh}(\varphi)$ for some lower semicontinuous submeasure $\varphi$ that is the supremum of those measures that it dominates. After decoding what all these words mean, one can rephrase the concept into a working definition like this:

\begin{defn}
	An ideal $\mathcal{I}$ of subsets of $\mathbb{N}$ is a non-pathological analytic $P$-ideal if and only if there exists a set $C\subseteq c_{00}\cap B_{\ell_1}^+$ of finitely supported positive elements of the ball of $\ell_1$ such that
	$$\mathcal{I} = \mathcal{I}(C) = \left\{A\subseteq \mathbb{N} : \lim_m \sup_{c\in C}\sum_{n\in A, n\geq m} c_n = 0 \right\}$$
\end{defn}

It follows from results of Borodulin-Nadzieja, Farkas and Plebanek, cf. \cite[Theorem 5.3]{BFP} and \cite[Proposition 5.3]{BF}, that an ideal that contains the finite sets is a non-pathological analytic $P$-ideal if and only if there exists an unconditional basis $\{e_n\}$ of a Banach space such that $\mathcal{I} = \{A\subset \mathbb{N} : \sum_{n\in A}e_n\text{ converges unconditionally}\}$. When we look at a space with unconditional basis as a Banach lattice, the unconditional convergence of the series is the same as the existence of the supremum. This reminds us of what we studied in Section~\ref{SectionSuprema}, with the difference that there we assumed that the global supremum $\sup_n e_n$ exists, which does not occur inside the span of the basis out of trivial cases. What we are going to do is just to add this supremum to the space from \cite{BFP} to get the main result of the section:

\begin{thm} If $\mathcal{I}$ is a non-pathological analytic $P$-ideal that contains the finite sets, then the Stone space of the algebra generated by $\mathcal{I}$ is a sick compactum.
\end{thm}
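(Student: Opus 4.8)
The plan is to realize $C(\operatorname{St}(\mathcal{A}))$, where $\mathcal{A}$ is the algebra generated by $\mathcal{I}$, as a principal ideal of a separable Banach lattice obtained by adjoining a global supremum to the Banach lattice attached to $\mathcal{I}$ by the Borodulin-Nadzieja--Farkas--Plebanek correspondence. First I would record that, since $\mathcal{I}$ is an ideal, $\mathcal{A}=\mathcal{I}\cup\{\mathbb{N}\setminus A:A\in\mathcal{I}\}$; hence $\operatorname{St}(\mathcal{A})$ carries $\mathbb{N}$ as a dense set of isolated points, and a bounded $f\colon\mathbb{N}\to\mathbb{R}$ extends to a continuous function on $\operatorname{St}(\mathcal{A})$ exactly when $\mathcal{I}\text{-}\lim_n f(n)$ exists; thus $C(\operatorname{St}(\mathcal{A}))$ is, as a vector lattice, $c_{\mathcal{I}}:=\{f\in\ell_\infty:\mathcal{I}\text{-}\lim_n f(n)\ \text{exists}\}$ with the supremum norm, and the identification sends the constant function $\mathbf{1}$ to $\mathbf{1}$. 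By the description of $K_u(E)$ recalled in the Introduction, it then suffices to produce a separable Banach lattice $Y$ and $u\in Y_+$ admitting a vector lattice isomorphism $c_{\mathcal{I}}\to Y_u$ onto the principal ideal that sends $\mathbf{1}$ to $u$. Write $\mathcal{I}=\mathcal{I}(C)$ with $C\subseteq c_{00}\cap B_{\ell_1}^+$, put $\varphi(A):=\sup_{c\in C}\sum_{n\in A}c_n$ (the exhaustive, non-pathological submeasure with $\operatorname{Exh}(\varphi)=\mathcal{I}$) and $\|f\|_\varphi:=\sup_{c\in C}\sum_n|f(n)|\,c_n$. Enlarging $C$ by a null sequence $\{\varepsilon_n\delta_n\}_n$ (which leaves $\operatorname{Exh}(\varphi)=\mathcal{I}$) we may assume $\bigcup_{c\in C}\operatorname{supp}(c)=\mathbb{N}$ and $\varphi(\{n\})>0$ for every $n$; and we may assume $\mathcal{I}$ proper, since the case $\mathcal{I}=\mathcal{P}(\mathbb{N})$ gives $\operatorname{St}(\mathcal{A})=\beta\mathbb{N}$, already shown sick.

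Next I would build $Y$. Let $X$ be the $\|\cdot\|_\varphi$-closure of $c_{00}$, i.e.\ the separable Banach lattice of the correspondence, with atomic basis $e_n=\delta_n$ and $\mathcal{S}_X(\{e_n\})=\mathcal{I}$; I regard $X$ as a space of real sequences, which is legitimate because $\|\cdot\|_\varphi$-convergence forces coordinatewise convergence. The key structural fact, a routine computation from $\mathcal{I}=\operatorname{Exh}(\varphi)$ and the non-pathology of $\varphi$, is that every $f\in X$ has $\{n:|f(n)|\ge\varepsilon\}\in\mathcal{I}$ for all $\varepsilon>0$ (so in particular $\mathcal{I}\text{-}\lim_n f(n)=0$), and conversely that $c_0(\mathcal{I}):=\{f\in\ell_\infty:\{|f|\ge\varepsilon\}\in\mathcal{I}\ \forall\varepsilon>0\}$ is contained in $X$; hence the bounded members of $X$ are precisely $c_0(\mathcal{I})$. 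Moreover $X$ is an order ideal of $\mathbb{R}^{\mathbb{N}}$: if $|h|\le|g|$ coordinatewise and $g\in X$ then $h\in X$ with $\|h\|_\varphi\le\|g\|_\varphi$ (compare truncations). Then I set $Y:=X\oplus\mathbb{R}\mathbf{1}\subseteq\mathbb{R}^{\mathbb{N}}$ (a genuine direct sum, as $\mathbf{1}\notin X$ because $\mathcal{I}$ is proper); this is a vector sublattice of $\mathbb{R}^{\mathbb{N}}$, since $|g+\lambda\mathbf{1}|-|\lambda|\mathbf{1}$ has modulus $\le|g|$ and so lies in $X$. I equip $Y$ with
$$\|y\|_Y\ :=\ \sup_{c\in C}\sum_n|y(n)|\,c_n\ +\ \bigl|\mathcal{I}\text{-}\lim_n y(n)\bigr|\qquad (y\in Y),$$
which is finite on $Y$. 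Because it is expressed through $|y|$ read coordinatewise, $\|\cdot\|_Y$ is monotone, hence a lattice norm (and nondegenerate, using $\bigcup_c\operatorname{supp}(c)=\mathbb{N}$). For completeness, a $\|\cdot\|_Y$-Cauchy sequence $y^{(k)}=g^{(k)}+\lambda_k\mathbf{1}$ has $(\lambda_k)$ Cauchy in $\mathbb{R}$ (the second summand of the norm dominates $|\lambda_k-\lambda_j|$), whence $(g^{(k)})$ is $\|\cdot\|_\varphi$-Cauchy in the complete space $X$; so $y^{(k)}$ converges in $Y$. Separability of $Y$ follows from that of $X$ (for instance $\{q\mathbf{1}+x:q\in\mathbb{Q},\ x\in c_{00}\cap\mathbb{Q}^{\mathbb{N}}\}$ is $\|\cdot\|_Y$-dense). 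Thus $Y$ is a separable Banach lattice.

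Finally I would identify the principal ideal $Y_u$ for $u:=\mathbf{1}$. For $y=g+\lambda\mathbf{1}$ with $g\in X$, one has $|y|\le r\mathbf{1}$ if and only if $|g(n)+\lambda|\le r$ for every $n$, which forces $g$ bounded, i.e.\ $g\in X\cap\ell_\infty=c_0(\mathcal{I})$; conversely, for any $g\in c_0(\mathcal{I})$ the element $g+\lambda\mathbf{1}$ is bounded. Therefore $Y_u=c_0(\mathcal{I})+\mathbb{R}\mathbf{1}=c_{\mathcal{I}}$ as a vector lattice, with order-unit norm $\|y\|_u=\inf\{r:|y|\le r\mathbf{1}\}=\|y\|_\infty$. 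So the identity gives a vector lattice isomorphism $c_{\mathcal{I}}=C(\operatorname{St}(\mathcal{A}))\to Y_u$ onto the principal ideal that sends $\mathbf{1}$ to $u$; by the correspondence of the Introduction, $\operatorname{St}(\mathcal{A})$ is homeomorphic to $K_u(Y)$, which is sick because $Y$ is separable.

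The hard part is the structural identification of the bounded part of the abstract completion $X$ with $c_0(\mathcal{I})$: that a bounded element of $X$ automatically has all its level sets $\{|f|\ge\varepsilon\}$ in $\mathcal{I}=\operatorname{Exh}(\varphi)$. This fails for pathological submeasures and is exactly where non-pathology enters; the proof approximates $f$ by its finite truncations and uses that $\varphi(D\setminus m)\to 0$ for $D\in\operatorname{Exh}(\varphi)$ together with the subadditivity of $\varphi$. A smaller but genuine subtlety is the shape of the norm on $Y$: the correction term $\bigl|\mathcal{I}\text{-}\lim_n y(n)\bigr|$ is needed for completeness (to force $(\lambda_k)$ to converge), yet it must be combined with the submeasure part in the coordinatewise form above; applying an analogous correction to the ``fluctuation part'' $y-(\mathcal{I}\text{-}\lim_n y(n))\mathbf{1}$ instead would destroy the monotonicity of the norm.
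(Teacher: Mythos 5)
Your proposal is correct and follows essentially the same route as the paper: norm the sequence space by $\|x\|=\sup_{c\in C}\sum_n c_n|x_n|$, take the closure $X_0$ of $c_{00}$, adjoin the constant sequence $\mathbf{1}$, and identify the principal ideal generated by $\mathbf{1}$ with $\{x\in\ell_\infty:\mathcal{I}\text{-}\lim_n x_n \text{ exists}\}\cong C(\operatorname{St}(\mathcal{A}))$, exactly as in the paper's Claims 0--3. The only (harmless) cosmetic difference is your extra norm term $|\mathcal{I}\text{-}\lim_n y(n)|$, which is not actually needed for completeness: $X_0+\mathbb{R}\mathbf{1}$ is already closed, hence complete, under the plain submeasure norm, being the sum of a closed subspace and a one-dimensional subspace of the ambient lattice $X_\infty=\{x\in\mathbb{R}^{\mathbb{N}}:\|x\|<\infty\}$ --- which is how the paper proceeds.
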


\begin{proof}
	Observe that the algebra generated by $\mathcal{I}$ is of the form $\mathcal{I}\cup \mathcal{F}$ where $\mathcal{F}$ is the filter made of the complements of sets in $\mathcal{I}$. Let $C \subseteq c_{00}\cap B_{\ell_1}^+$ be the set such that $\mathcal{I} = \mathcal{I}(C)$. Note that, since the finite sets belong to $\mathcal{I}$, the ideal $\mathcal{I}(C)$ does not change if we add to $C$ the sequences $(1,0,0,0,\ldots)$, $(0,1/2,0,0,0,\ldots)$, $(0,0,1/3,0,0,\ldots)$ and so on. So we can suppose that
	$\alpha_n = \sup\{c_n : c\in C\}>0$ for every $n$. We shall define now three Banach lattices $X_0$, $X_1$ and $X_\infty$. In the terminology of \cite{BF}, $X_0 = \text{EXH}(\Phi)$ and $X_\infty = \text{FIN}(\Phi)$, while $X_1$ will be the Banach lattice that we are looking for.

	For a sequence $x=(x_n)$ of real numbers, define the norm
	
	$$\|x\| = \sup\left\{\sum c_n |x_n| : c\in C\right\}\leq \|x\|_\infty$$

	Consider first $X_\infty = \{x\in\mathbb{R}^\mathbb{N} : \|x\|<\infty\}$. This is a Banach lattice. The lattice norm properties are easily checked, only completeness requires a little thought. If $\{x^p : p\in\mathbb{N}\}$ is a Cauchy sequence, then it pointwise converges to some $x\in\mathbb{R}^\mathbb{N}$ (because $\alpha_n|y_n|\leq \|y\|$). The Cauchy sequence must be bounded and what we want to prove is that $\|x\|\leq \sup_p\|x^p\|$. So we take $c\in C$ and we want to check that $\sum c_n|x_n| \leq \sup_p\|x^p\|$. But since $c$ is finitely supported $$\sum c_n |x_n| = \sum c_n |\lim_p x^p_n| = \lim_p \sum c_n |x_n^p| \leq \sup_ p \|x^p\|.$$
	
	Note that $\|e_n\| = \alpha_n$, and $\|(1,1,1,\cdots)\|=\sup\{\sum_n c_n : c\in C\}\leq 1$. So if we call $e=(1,1,1,\ldots)$, then we have that $e\in X_\infty$. Let $X_0$ be the Banach sublattice of $X_\infty$ generated by $\{e_n : n\in \mathbb{N}\}$, and $X_1$ the sublattice generated by $\{e_n : n\in \mathbb{N}\}\cup \{e\}$. As an illustrating (extreme) example, if $\alpha_n=1$ for all $n$, then $\|x\| = \|x\|_\infty$, $X_\infty = \ell_\infty$, $X_0 = c_0$ and $X_1=c$. Given $x=(x_1,x_2,...)$, we define $T_n x = (0,0,\ldots,0,x_n,x_{n+1},\ldots)$. That is, $T_n(x)_k = 0$ if $k<n$, while $T_n(x)_k = x_k$ if $k\geq n$.\\
	
	\noindent
	\emph{Claim 0.} We have that $x\in X_0$ if and only if $\lim_n \|T_n x\| = 0$.
	
	\begin{proof}
		$[\Leftarrow]$ Note that $x-T_n(x)\in span\{e_1,\ldots,e_n\}\in X_0$. So $x= \lim_n (x-T_n(x))\in X_0$.
		$[\Rightarrow]$ The set $c_{00}$ of finitely supported vectors forms a vector lattice whose closure is $X_0$. So for each $\varepsilon$ there exists $x^{\varepsilon}$ supported below $n_\varepsilon$ such that $\|x-x^\varepsilon\|<\varepsilon$. But note that for $n>n^\varepsilon$, $|T_n(x)| \leq |x-x^\varepsilon|$. 	
	\end{proof}
	
	\noindent
	\emph{Claim 1.} $X_1 = X_0 + \mathbb{R}e$.
	
	\begin{proof}
		It is enough to check that if $x\in X_0 + e\mathbb{R}$ then $|x|\in X_0+e\mathbb{R}$. We know that there is $r\in\mathbb{R}$ such that $\lim_n\|T_n(x-re)\|=0$. It is enough to prove that $\lim_n\|T_n(|x|-|r|e)\| = 0$. Since $T_n$ is a lattice homomorphism $|T_n(|x|-|r|e)| \leq |T_n(x-re)|$ and therefore $\|T_n(|x|-|r|e)\| \leq \|T_n(x-re)\|$.
	\end{proof}
	
	Now note that, by Claim 0, $\mathcal{I} = \{A\subseteq\mathbb{N} : 1_A\in X_0\}$.\\

	\noindent
	\emph{Claim 2.} $X_0\cap \ell_\infty =  \left\{(x_n)\in \ell_\infty : \lim_{\mathcal{F}}x_n = 0\right\}$.

	\begin{proof}
		$[\subseteq]$ If $\lim_{\mathcal{F}}x_n \neq 0$, then there exists $\varepsilon>0$ such that $A=\{n : |x_n|>\varepsilon\}\not\in\mathcal{I}$. This means that $1_A\not\in X_0$, and at the same time $\varepsilon 1_A \leq |x|$. This is a contradiction, because it is clear from Claim 0 that if $x\in X_0$ and $|y|\leq |x|$, then $y\in X_0$. $[\supseteq]$ Suppose that $x$ is in the righthand set and we check that $x\in X_0$ using Claim 0. Fix $\varepsilon>0$. We know that $A=\{n : |x_n|>\varepsilon/2\}\in\mathcal{I}$, so $1_A\in X_0$. Thus, there exists $m_0$ such that for $m>m_0$ we have that $\|T_m 1_A\|<\frac{1}{2}\varepsilon\|x\|_\infty^{-1}$ and therefore
		$$\|T_m x\| \leq \|T_m (1_A x)\| + \|T_m(x-1_Ax)\| \leq \|x\|_\infty \|T_m 1_A\| + \|T_m(x-1_Ax)\|_\infty < \varepsilon.$$
	\end{proof}
	
	\noindent
	\emph{Claim 3.} $X_1\cap \ell_\infty =  \left\{(x_n)\in \ell_\infty : \lim_{\mathcal{F}}x_n \text{ exists}\right\}$. 
	\begin{proof}
		Follows from Claims 1 and 2.	
	\end{proof}

	It remains to show that	$K_e(X_1)$ is homeomorphic to the Stone space  of the algebra $\mathcal{B} = \mathcal{I}\cup \mathcal{F}$. This Stone space $L$ is obtained by gluing together all ultrafilters of $\betaN$ that contain $\mathcal{F}$, and so $C(L) \simeq \{(x_n)\in\ell_\infty : \lim_\mathcal{F}x_n \text{ exists}\}$. Note also that the ideal generated by $e$ inside $X_1$ equals $X_1\cap \ell_\infty$. So we just apply Claim 3.
\end{proof}

We refer to \cite{BFP,BF,Farah} for further information on examples and non-examples of non-pathological analytic $P$-ideals. We may just highlight two basic ones:

If we fix a sequence $(\alpha_n)$ of positive numbers with $\lim_n\alpha_n=0$, and we take
$$C=\{(c_n)\in c_{00}\cap B_{\ell_1}^+ : c_n\leq \alpha_n \text{ for all }n\},$$
then we obtain the summable ideal $\mathcal{I}(C) = \{A\subseteq \mathbb{N} : \sum_{n\in A}\alpha_n <\infty\}.$ This is because
$$\|T_m 1_A\| = \sup\left\{\sum_{n\in A, n\geq m} c_n |x_n| : c\in C\right\} = \min\left\{1,\sum_{n\in A,n\geq m}\alpha_n\right\}.$$

On the other hand, if we take $$C = \{(c_1=1/n,c_2=1/n,\ldots,c_n=1/n,0,0,0,\ldots) : n\in\mathbb{N}\}$$ then
$\mathcal{I}(C)$ is the ideal of statistically zero sets, because
$$\|T_m 1_A\| = \sup\left\{\sum_{n\in A, n\geq m} c_n |x_n| : c\in C\right\} = \sup_n\left\{\frac{|A|\cap\{m,m+1,\ldots,n\}}{n}\right\}.$$

\section{Open questions}

We have shown (see Section \ref{SectionAnalytic}) that sick compacta always admit strictly positive regular measures of countable type which are analytic, and satisfy several other properties. However, the following is still open:

\begin{prob}
	Find an intrinsic characterization of sick compacta.
\end{prob}

In connection with this we have a seemingly simpler question:

\begin{prob} 
	If $\mu$ is a strongly countably determined measure on a sick compactum (that is, a $G_\delta$-point in $P(K)$), is $\mu$ an analytic measure?
\end{prob}

We mentioned at the end of Section \ref{SectionFirstExamples} that the sick compactum $K(FBL(\N))$ contains every sick compactum, which coincides with the structure space $K(FBL(\N))$ of the free Banach lattice $FBL[\ell_1]$ generated by $\ell_1$ (see Corollary 2.9 in \cite{ART18}). With a bit more generality one can ask the following:

\begin{prob} 
	Can we give a description, in terms of more familiar spaces, of the structure spaces of some relevant Banach lattices, like $K(C(M,L_1))$, for the space of continuous functions from a metric compactum $M$ into $L_1$ (cf. \cite{LLOT19}), or $K(FBL[E])$, the free Banach lattice generated by a Banach space $E$?
\end{prob}


\begin{prob} 
	Is there a nonseparable connected sick compactum?
\end{prob}

Recall that we defined s-sick algebras as those isomorphic to the clopen sets of a sick compactum, and characterized them in Theorem \ref{supremaofsequences}. In connection with this, and Stone duality, the following is a very natural question.

\begin{prob} 
	Is it true that if $K$ is sick, then the Stone space of the algebra of clopen subsets of $K$ is also sick?
\end{prob}

Motivated by Corollary \ref{corproductos} and Proposition \ref{propproductcountablyclopens}, we are lead to consider the following.

\begin{prob} 
	Is there a nonmetrizable product of two infinite compact spaces that is sick?
\end{prob}

Keeping in mind Proposition \ref{orthogonalideal} and its consequences, the ideals $J(\mathcal{B})$ associated to s-sick algebras have certain relevance. We thus propose the following.

\begin{prob} 
	Characterize the ideals $J(\mathcal{B})$ for s-sick algebras $\mathcal{B}$.
\end{prob}

In connection with the results of Section \ref{SectionPideals}, one is lead to wonder:

\begin{prob} 
	For what ideals $\mathcal{I}$ of subsets of $\mathbb{N}$ is the algebra generated by $\mathcal{I}$ s-sick? Out of the trivial case of maximal ideals, must such $\mathcal{I}$ be non-pathological analytic $P$-ideals?
\end{prob}


\subsection*{Acknowledgment}
Research partially supported by Fundaci\'on S\'eneca - ACyT Regi\'on de Murcia grant 20797/PI/18 and Agencia Estatal de Investigación and EDRF/FEDER ``A way of making Europe" (MCIN/AEI/10.13039/501100011033) through grants MTM2017-86182-P and PID2021-122126NB-C32 (Avilés, Martínez-Cervantes and Rueda Zoca), PGC2018-093794-B-I00 (Rueda Zoca), PID2020-116398GB-I00 and CEX2019-000904-S (Tradacete). The research of A. Rueda Zoca was also supported by Junta de Andaluc\'ia Grants A-FQM-484-UGR18 and FQM-0185.

\end{document}